\thanks{\copyright 2020. This work is licensed under a CC BY 4.0 license}
\begin{document}
\title[\hfilneg EJDE-2022/**\hfil Positively Homogeneous Maximal   Monotone Operators]
{Solvability of  Inclusions Involving Perturbations of Positively Homogeneous Maximal\newline  Monotone Operators}

\author[Adhikari, Aryal, Bhatt, Kunwar,  Puri,  Ranabhat \hfil EJDE-2022/**\hfilneg]
{Dhruba R. Adhikari, Ashok Aryal, Ghanshyam Bhatt, Ishwari J. Kunwar,  Rajan Puri, Min Ranabhat}

\address{Dhruba R. Adhikari \newline
Department of Mathematics,
Kennesaw State University,
Marietta, GA 30060, USA}
\email{dadhikar@kennesaw.edu}

\address{Ashok Aryal \newline
Mathematics Department, Minnesota State University Moorhead, Moorhead, MN 56563}
\email{ashok.aryal@mnstate.edu}

\address{Ghanshyam Bhatt\newline
	Department of Mathematical Sciences, Tennessee State University,  Nashville, TN 37209}
\email{gbhatt@tnstate.edu}

\address{Ishwari J. Kunwar \newline
Department of Mathematics and Computer Science, Fort Valley State University, Fort Valley, GA 31030}
\email{kunwari@fvsu.edu}

\address{Rajan Puri \newline
	Department of Mathematics, Wake Forest University,	 Winston-Salem, NC 27109}
\email{purir@wfu.edu}

\address{Min Ranabhat\newline
Department of Mathematical Sciences, University of Delaware, EWG 315, Newark, DE 19716}
\email{ranabhat@udel.edu}

\thanks{Submitted  April 1, 2022. Published Aug 30, 2022.}
\subjclass[2010]{47H14, 47H05, 47H11}
\keywords{Topological degree theories;
	operators of type $(S_+)$;    maximal \hfill\break\indent  monotone  operators;  duality mappings; Yosida approximants}

\begin{abstract}
 	Let $X$ be a real reflexive Banach space and  $X^*$  be its   dual space. Let $G_1$ and $G_2$ be open subsets of $X$  such that 
 $\overline G_2\subset G_1$,   $0\in G_2$,
 and $G_1$ is bounded.
 Let $L: X\supset D(L)\to X^*$ be a densely defined linear maximal
 monotone operator,   $A:X\supset D(A)\to 2^{X^*}$  be a  maximal monotone  and positively
 homogeneous operator of degree $\gamma>0$,   $C:X\supset D(C)\to X^*$ be a bounded
 demicontinuous operator of type $(S_+)$ w.r.t.  $D(L)$, and $T:\overline G_1\to 2^{X^*}$ be a compact and upper-semicontinuous operator whose values are  closed and convex  sets in $X^*$. We first take $L=0$ and establish the existence of nonzero solutions of 
 $Ax+ Cx+ Tx\ni  0$ 	in the set $G_1\setminus G_2.$   Secondly, we assume that $A$ is bounded and establish the existence of nonzero solutions of $Lx+Ax+Cx\ni 0$ in   $G_1\setminus G_2.$
  	We  remove the restrictions  $\gamma\in (0, 1]$ for $Ax+ Cx+ Tx\ni 0$ and $\gamma= 1$ for $Lx+Ax+Cx\ni 0$  from  such existing results in the literature.  We also present applications to
 elliptic and parabolic partial differential equations in general
 divergence form  satisfying Dirichlet boundary conditions. 
\end{abstract}

\maketitle
\numberwithin{equation}{section}
\newtheorem{theorem}{Theorem}[section]
\newtheorem{lemma}[theorem]{Lemma}
\newtheorem{corollary}[theorem]{Corollary}
\newtheorem{definition}[theorem]{Definition}
\newtheorem{proposition}[theorem]{Proposition}
\newtheorem{remark}[theorem]{Remark}
\newtheorem{application}[theorem]{Application}
\allowdisplaybreaks

\newcommand{\la}{\lambda }
\newcommand{\ol}{\overline}
\newcommand{\lan}{\langle }
\newcommand{\ran}{\rangle }
\newcommand{\var}{\varepsilon}
\newcommand{\wi}{\widetilde}
\newcommand{\rh}{\rightharpoonup}
\newcommand{\nifty}{n\to\infty}
\newcommand{\ls}{\limsup_{n\to\infty}}
\newcommand{\li}{\liminf_{n\to\infty}}
\newcommand{\vp}{\varphi}
\renewcommand{\theenumi}{\roman{enumi}}%

\section{Introduction and Preliminaries}\label{S1}

	Let $X$ be a real reflexive Banach space and  $X^*$  be its  topological dual space. The symbol $2^{X^*}$ denotes the collection of all subsets of $X^*$.  
The norm on $X$ is  denoted by $\|\cdot\|_X$.  When there is no  risk of misunderstanding, the norms  on $X$ and $X^*$ are both denoted  by $\|\cdot\|$.  The pairing $\langle x^*,x\rangle$ denotes the value of the
functional $x^*\in X^*$ at $x\in X$.
The symbols $\partial Z, \textrm{Int} Z, \overline Z$ and $ \operatorname{co}Z$ denote
the  boundary, interior,  closure, and convex hull of the set $Z \subset X,$  respectively.
The symbol $B_X(0,R)$ denotes the open ball of radius $R>0$ with center at
$0$ in  $X$. The symbols $\mathbf{R}$ and $\mathbf{R}_+$ denote $ (-\infty, \infty)$ and
$[0,\infty)$, respectively. For a sequence $\{x_n\}$  in $X$ and $x_0\in X$,  we denote  by $x_n\to x_0$ and $x_n\rightharpoonup x_0$  the  strong convergence and weak convergence,  respectively.  Given another real Banach $Y,$ an operator $T : X\supset D(T)\to Y$ is said
to be \textit{bounded} if it maps bounded subsets of the domain $D(T)$ onto
bounded subsets of  $Y.$
The operator $T$ is said to be \textit{compact} if it maps bounded subsets of
$D(T)$ onto relatively compact subsets in $Y.$
The operator $T$  is said to be \textit{demicontinuous}
if it is strong-to-weak continuous on $D(T)$.

A multivalued operator $A$ from $X$ to $X^*$ is written as
$A:X\supset D(A) \to 2^{X^*},$ where $D(A)=\{x\in X: Ax\neq\emptyset\}$
is the effective domain of $A$. Here,  $Ax$ means $A(x)$, and these notations are used interchangeably in the sequel. 
We denote  the graph of $A$ by ${\rm Gr}(A)$, i.e., ${\rm Gr}(A)=\{(x,y): x\in D(A), y\in Ax\}$.

\begin{definition}\label{Hom} \rm
	An operator $A: X\supset D(A)\to 2^{X^*}$ is said to be
	\textit{positively homogeneous of degree $\gamma >0$} if $(x,y)\in {\rm Gr}(A)$ implies $sx\in D(A)$ for all
	$s\ge 0$ and $(sx, s^\gamma y)\in {\rm Gr}(A).$ 
\end{definition}

\begin{remark}\rm 
	An equivalent condition for an operator $A: X\supset D(A)\to 2^{X^*}$to be
	positively homogeneous of degree $\gamma >0$ is that $x\in D(A)$ implies $sx\in D(A)$ for all $s\ge 0$  and $s^\gamma Ax \subset A(sx).$  It follows that  a positively homogeneous operator $A$ of degree $\gamma >0$ satisfies $0\in A(0)$.  When $A$ is positively homogeneous of degree $\gamma >0,$ it can be verified that $x\in D(A)$ implies $sx\in D(A)$  for all $s>0$ and $s^\gamma Ax =  A(sx).$ However, in general, the property $s^\gamma Ax =  A(sx)$ may not be true for $s=0.$ For example, let $A: \mathbf R\supset {[0, \infty)} \to 2^\mathbf R$ be given by 	$$A x=\begin{cases} 
		(-\infty, 0] & \mbox{for  }
		x=0\\
		x^\gamma & \mbox{for  }
		x>0.
	\end{cases}
	$$  Clearly, $A(0) = (-\infty,  0]\ne \{0\}.$  
\end{remark}

A \textit{gauge} function is a strictly  increasing continuous function $\varphi: \mathbf{R}_+ \to \mathbf{R}_+$ with $\varphi(0) = 0$ and $\varphi(r) \to \infty$ as $r\to\infty$.  The  \textit{duality 	mapping} of $X$ corresponding to a gauge function $\varphi $ is the mapping  $J_\varphi:X \supset D(J_\varphi )\to 2^{X^*}$  defined by
$$
J_\vp x = \{x^*\in X^* : \langle x^*, x\rangle
= \varphi(\|x\|)\|x\|, \; \|x^*\| = \varphi(\|x\|)\}, \quad x\in X.
$$
The Hahn-Banach theorem ensures that $D(J_\varphi) = X,$ and therefore  $J_\varphi :X\to 2^{X^*}$
is, in general, a multivalued mapping. The  duality mapping corresponding to the gauge function $\varphi (r) = r$ is called the \textit{normalized duality} mapping and denoted  by $J$. 
It is well-known that the duality mapping $J_\vp$ 	
satisfies
$$J_\varphi x  = \frac{\vp(\|x\|)}{\|x\|} Jx, \quad  x\in X\setminus \{0\}.
$$
Since $J$ is homogeneous of degree $1$,  we have
\begin{equation*}
	J_\varphi(s x) = \frac{\vp(s\|x\|)}{\|x\|} Jx, \quad (s, x)\in \mathbf{R}_+\times (X\setminus \{0\}).
\end{equation*}
In particular, when $\vp(r) = r^{p-1}, 1< p< \infty$, we get  	$J_\varphi x = \|x\|^{p-2} Jx,$  $x\in X\setminus \{0\},$ which implies
\begin{equation*}
	J_\varphi(s x) = s^{p-1} J_\vp  x, \quad (s, x)\in \mathbf{R}_+\times X,
\end{equation*}
i.e., $J_\varphi$ is positively homogeneous of degree $p-1$. 

When $X$ is reflexive and both  $X$ and $X^*$ are strictly convex,   the inverse $J^{-1}_\vp$ of $J_\vp$ is the duality mapping  of $X^*$ with the gauge function $\vp^{-1}(r) = r^{q-1},$ where $q$ is given by $1/p + 1/q = 1.$  It is easy to verify that
\begin{equation}\label{11}
	J_\varphi^{-1}(s x^*) = s^{q-1} J_\varphi^{-1} x^*, \quad (s, x^*)\in \mathbf{R}_+\times X^*.\end{equation}
It is clear that $J_\vp$ is positively homogeneous of degree $\gamma>0$ if and only if $\vp$ is positively homogeneous of degree $\gamma>0.$   Additional  properties of duality mappings in connection with Banach space geometry  can be found in Alber and Ryazantseva~\cite{Alber} and Cioranescu~\cite{Cioranescu}. 
\begin{definition}{\rm 
	An operator $A:X\supset D(A)\to 2^{X^*}$
	is said to be \textit{monotone} if for all
	$(x, u), (y, v)\in  {\rm Gr}(A)$ we have
	$
	\langle u - v,x-y\rangle \ge 0.
	$
	A monotone operator $A:X\supset D(A)\to 2^{X^*}$ is said to be \textit{maximal monotone} if ${\rm Gr}(A)$ is
	maximal in $X\times X^*$, when $X\times X^*$ is partially ordered by 
	set inclusion.}
\end{definition}
In what follows, we assume that $X$ is reflexive and both $X$ and $X^*$ are strictly convex.  
It is well-known that the duality mapping $J_\vp$ is maximal monotone. A monotone operator $A$ is maximal if and only if $R(A+\lambda J_\vp) = X^*$ for all $\lambda\in (0,\infty)$ and all  gauge functions $\vp.$ For a proof of this result for  $\vp(r) = r^{p-1}, 1<p<\infty,$ the reader is referred, for example,  to Barbu~\cite[Theorem~2.3]{BA}.  

\begin{definition}\label{def1} \rm
	Let $L:X\supset D(L)\to X^*$ be a densely defined
	linear maximal monotone operator. An operator   $C: X\supset D(C)\to X^*$ is said to be of \textit{type $(S_+)$} \textit{w.r.t.} $D(L)$ if for
	every sequence $\{x_n\}\subset D(L)\cap D(C)$ with $x_n\rightharpoonup x_0$
	in $X$, $Lx_n\rightharpoonup Lx_0$ in $X^*$ and
	$$
	\limsup_{n\to\infty} \langle Cx_n, x_n- x_0\rangle \le 0,
	$$
	we have $x_n\to x_0$ in $X.$ In this case, if $L=0$, then $C$ is said to be of \textit{type $(S_+)$}.
\end{definition}

\begin{definition}\label{def2} \rm
	A family of operators  $C(s):X\supset G\to X^*, s\in[0,1]$, is said to be
	a  \textit{homotopy of type $(S_+)$ w.r.t. $D(L)$} if for every sequence
	$\{x_n\}\subset D(L)\cap G$ with $x_n\rightharpoonup x_0$ in $X$
	and $Lx_n\rightharpoonup Lx_0$ in $X^*$, $\{s_n\}\subset[0,1]$ with $s_n\to s_0$ and
	$$
	\limsup_{n\to\infty} \langle C(s_n)x_n, x_n - x_0\rangle \le 0,
	$$
	we have $x_n\to x_0$ in $X,$ $x_0\in G$ and $C(s_n)x_n \rightharpoonup C(s_0)x_0$ in
	$X^*.$ In this case, if $L=0$, then $C(s)$ is said to be a \textit{homotopy of type $(S_+)$.}
	A homotopy $C(s)$ of type $(S_+)$ w.r.t. $D(L)$ is \textit{bounded} if the set
$\{C(s)x: s\in[0,1], x\in G\}$
	is bounded.
\end{definition}

\begin{definition}\label{D0} \rm
	An operator $T: X\supset D(T)\to 2^{X^*}$ is
	said to be of \textit{class $(P)$} if  
	
	\begin{enumerate}
		\item it maps bounded sets to
		relatively compact sets; 
		\item for every $x\in D(T)$, $Tx$ is a closed and
		convex subset of $X^*;$ and
		\item  $T(\cdot)$ is
		upper-semicontinuous, i.e., for every closed set
		$F\subset X^*$, the set $T^-(F) = \{x\in D(T): Tx \cap F \ne
		\emptyset\}$ is closed in $X$.
	\end{enumerate} 
\end{definition}
Hu and Papageorgiou   introduced the operators of class $(P)$ in \cite{HP}. We recall a compact-set valued upper-semicontinuous operator $T$ is
closed. Furthermore,  given an operator $T$ of class $P$ and a sequence
$\{(x_n, y_n)\}\subset {\rm Gr}(T)$ such that $x_n\to x\in D(T)$, the sequence
$\{y_n\}$ has a cluster point in $Tx$.

This paper is organized as follows. In Section~\ref{S2}, we study  variants of  the standard Yosida approximants  introduced  in Br\'ezis, Crandall, and Pazy~\cite{BCP}  and their fundamental properties.  
Since the topological degree theory for $(S +)$-operators is employed to establish the main existence results in  the later sections, we provide  several results involving  variants of Yosida approximants related to the Browder  degree theory \cite{Browder1983}.  

In Section~\ref{S3},  we first prove the existence of nonzero solutions of $Ax+Cx+ Tx\ni 0$  by utilizing the topological degree theories developed by
Browder \cite{BrowderHess1972} and Skrypnik \cite{Skrypnik1994}.
In this case, $A$ is  maximal monotone with $A(0) =\{0\}$  and positively
homogeneous of degree $\gamma>0$,  $C$ is bounded demicontinuous
of type $(S_+)$, and $T$ is  of class $(P)$. 
This result extends an analogous  result  for  $\gamma\in (0, 1]$ established  in \cite{Adhikari2017} to an arbitrary degree of homogeneity $\gamma>0.$
Another main result established in this section is the existence of nonzero solutions of
$Lx+Ax+Cx\ni 0$, where $L$, $C$ are as above, and $A$ is a  bounded
maximal monotone  and positively homogeneous of degree $\gamma>0$.
This result  extends  an analogous result for $\gamma =1$   established  in \cite{Adhikari2017} to an arbitrary degree of homogeneity $\gamma>0.$

In Section~\ref{S4}, we present some applications of the theories developed in Section~\ref{S3}  to elliptic and parabolic partial differential equations, in general, divergence form that  include $p$-Laplacian with $1<p<\infty$ and satisfy Dirichlet boundary conditions.

For additional facts and various topological degree theories related
to the subject of this paper, the reader is referred to Adhikari and Kartsatos \cite{AK1, AK2008}, Kartsatos and Lin \cite{KartsatosLin2003}, and
Kartsatos and Skrypnik \cite{KartsatosSkrypnik,KartsatosSkrypnik2005b}.
For further information on functional analytic tools used herein,
the reader is referred to Barbu \cite{BA}, Browder \cite{Browder1976},
Pascali and Sburlan \cite{PS}, Simons \cite{Simons},
Skrypnik \cite{Skrypnik1986,Skrypnik1994}, and Zeidler \cite{Zeidler1}.

\section{Variants of Yosida Approximants  and Related Properties}\label{S2}
Let $X$ be a  strictly convex and reflexive Banach space with strictly
convex  $X^*.$ By using the duality mapping $J_\vp$ corresponding to an arbitrary gauge function $\vp,$ we  study variants of  the  Yosida approximants in Br\'ezis et al. in \cite{BCP} and resolvents  of  a maximal monotone operator $A:X\supset D(A)\to 2^{X^*}.$ 
For each $\lambda>0$ and each $x\in X$,  the inclusion 
\begin{equation} \label{Yosida-inclusion}
	0\in J_\vp(x_\lambda- x) + \lambda Ax_\lambda 
\end{equation} has a unique solution $x_\lambda\in D(A)$ (see Proposition~\ref{prop0} (i)).  
We define $J_\lambda^\vp : X\to D(A)\subset  X$  and $A_\lambda^\vp : X\to X^*$ by 
\begin{equation}\label{Yosida}
	J_\lambda^\vp x := x_\lambda \quad \mbox{ and }\quad A_\lambda^\vp x: = \dfrac{1}{\lambda} J_\vp (x- J_\lambda^\vp x), \quad x\in X.
\end{equation} The operators $A_\lambda^\vp$ and $J_\lambda^\vp$ are  variants of the  standard Yosida approximant $A_\lambda$  and resolvent $J_\lambda$  of $A.$  
For each  $x\in X$,  we have $$A_\lambda^\vp x\in A (J_\lambda^\vp x) \quad \mbox{and} \quad 	x = J_\lambda^\vp x + J^{-1}_\vp ( \la A_\lambda^\vp x).$$  When $\vp(r) = r^{p-1},$ a splitting of $x$  in terms of $A_\lambda^\vp$ and $J_\lambda^\vp$ is 
\begin{equation}\label{splitting}
	x = J_\lambda^\vp x + \lambda^{q-1} J^{-1}_\vp ( A_\lambda^\vp x),
\end{equation}  and therefore
\begin{equation}\label{closed-form}
	A_\lambda^\vp x= \left(A^{-1}+ \lambda^{q-1} J_\vp^{-1}\right)^{-1} x, \quad x\in X.
\end{equation}
It is easy to verify  that $A= A_\lambda^\vp$ if and only if $A=0$. In fact, if $A= 0$, then $J_\lambda^\vp  = I$, the identity operator on $X$. Moreover, if $0\in D(A)$ and $0\in A(0)$, then  $A_\lambda^\vp 0 =0$.	 

The choice of an appropriate gauge function is essential for the main existence results in this paper. The following proposition summarizes some important properties of $A_\lambda^\vp$ and $J_\lambda^\vp$ along the lines of analogous  properties of  $A_\lambda$ and $J_\lambda$.  A complete proof is provided here for the reader's convenience.

\begin{proposition}\label{prop0} Let $X$ be a   strictly convex and reflexive Banach space with strictly
	convex dual $X^*$ and
	$A:X\supset D(A)\to 2^{X^*}$ be  a maximal monotone operator.  Then the following statements hold.
	
	\begin{enumerate}
		\item  The operator  $A_\lambda^\vp$ is single-valued, monotone,  bounded on bounded subsets of $X,$ and demicontinuous  from $X$ to $X^*$.
		\item   For every $x\in D(A)$ and $\lambda >0,$ we have $$\|A_\lambda^\vp x\|\le |Ax|:= \inf\{\|x^*\|: x^*\in Ax \}.$$
		\item The operator $J_\lambda^\vp $ is bounded on bounded subsets of $X,$  demicontinuous  from $X$ to $ D(A)$, and 
		\begin{equation*}\label{resolvent}
			\lim_{\lambda\to 0} J_\lambda^\vp x = x  \quad  \mbox{for all } x\in\overline{\mbox{co}D(A)}.
		\end{equation*}
		\item If $\lambda_n\to 0$, $x_n\rh x$ in $X$, $A_{\lambda_n}^\vp x_n \rh y$ and 
		$$\limsup_{n, m\to\infty} \lan A_{\lambda_n}^\vp x_n- A_{\lambda_m}^\vp x_m , x_n - x_m\ran \le 0,  $$
		then $(x , y) \in {\rm Gr} (A)$ and 
		$$\lim_{n, m\to\infty} \lan A_{\lambda_n}^\vp x_n- A_{\lambda_m}^\vp x_m , x_n - x_m\ran =0.  $$
		\item For every  sequence $\{\lambda_n\}$ with  $\lambda_n\to 0$, $A_{\lambda_n}^\vp x\rh A^{\{0\}}x$ for all $x\in D(A)$.  In addition, if $X^*$ is uniformly convex,  then $A_{\lambda_n}^\vp x \to A^{\{0\}}x$  for all $x\in D(A).$
		\item If $\lambda_n\to 0$ and  $x\not\in\overline{D(A)},$ then
		$$\lim\limits_{n\to\infty} \|A_{\lambda_n}^\vp x\| = \infty.$$
	\end{enumerate}
\end{proposition}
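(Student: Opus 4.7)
The plan is to treat the six items in order, leaning throughout on the fact that $R(\lambda A + J_\vp) = X^*$ for every $\lambda>0$ (the surjectivity criterion for maximal monotonicity recalled in the introduction) together with the strict monotonicity of $J_\vp$ coming from strict convexity of $X^*$.

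For (i), fixing $x\in X$, the inclusion \eqref{Yosida-inclusion} is equivalent to finding $y$ with $0\in \lambda Ay + J_\vp(y-x)$; surjectivity after translation yields existence, and strict monotonicity of $J_\vp$ yields uniqueness. Monotonicity of $A_\lambda^\vp$ follows from the decomposition
\[
\lambda\langle A_\lambda^\vp x - A_\lambda^\vp y,\, x - y\rangle
 = \lambda\langle A_\lambda^\vp x - A_\lambda^\vp y,\, J_\lambda^\vp x - J_\lambda^\vp y\rangle
  + \langle J_\vp u_x - J_\vp u_y,\, u_x - u_y\rangle
\]
with $u_x := x - J_\lambda^\vp x$, both terms being nonnegative by monotonicity of $A$ and $J_\vp$ respectively. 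Boundedness on bounded sets and demicontinuity then come from the exact duality identity $\langle A_\lambda^\vp x,\, x - J_\lambda^\vp x\rangle = \|A_\lambda^\vp x\|\,\|x - J_\lambda^\vp x\|$ combined with a standard subsequence-plus-uniqueness argument. For (ii), I would apply monotonicity of $A$ to the pairs $(x,y)$ with $y\in Ax$ and $(J_\lambda^\vp x, A_\lambda^\vp x)$; since $x - J_\lambda^\vp x = J_\vp^{-1}(\lambda A_\lambda^\vp x)$ and the duality pairing there equals the product of norms, Cauchy-Schwarz yields $\|A_\lambda^\vp x\|\le\|y\|$, and infimizing delivers the estimate.

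For (iii), boundedness of $J_\lambda^\vp$ on bounded sets is obtained by applying (ii) against a fixed $x_0\in D(A)$ and then reading off $\|J_\lambda^\vp x\|$ from the splitting; demicontinuity follows from uniqueness in the defining inclusion. The convergence $J_\lambda^\vp x\to x$ for $x\in D(A)$ is $\|J_\vp^{-1}(\lambda A_\lambda^\vp x)\|\to 0$ as $\lambda\to 0$, using boundedness of $A_\lambda^\vp x$ from (ii) together with continuity of $J_\vp^{-1}$ at $0$ with $J_\vp^{-1}(0)=0$; an equicontinuity/density argument extends this to $\overline{\operatorname{co}\,D(A)}$. For (iv), the limsup hypothesis and the boundedness of $\{A_{\lambda_n}^\vp x_n\}$ it entails force $\|x_n - J_{\lambda_n}^\vp x_n\|\to 0$, hence $J_{\lambda_n}^\vp x_n\rh x$. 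Applying monotonicity of $A$ to $(J_{\lambda_n}^\vp x_n, A_{\lambda_n}^\vp x_n)$ and $(J_{\lambda_m}^\vp x_m, A_{\lambda_m}^\vp x_m)$ produces
\[
\langle A_{\lambda_n}^\vp x_n - A_{\lambda_m}^\vp x_m,\, J_{\lambda_n}^\vp x_n - J_{\lambda_m}^\vp x_m\rangle \ge 0,
\]
which, when rewritten in terms of $\langle \cdot,\, x_n - x_m\rangle$ plus cross terms that vanish, forces the claimed double limit to be $0$; the membership $(x,y)\in \operatorname{Gr}(A)$ then follows from the standard maximal monotonicity criterion applied to the cluster pair $(J_{\lambda_n}^\vp x_n, A_{\lambda_n}^\vp x_n)$.

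For (v), (ii) gives $\|A_{\lambda_n}^\vp x\|\le |Ax|$, so a subsequence converges weakly to some $y$; (iv) places $y\in Ax$ with $\|y\|\le \liminf\|A_{\lambda_n}^\vp x\|\le |Ax|$, so $y = A^{\{0\}}x$ by uniqueness of the minimum-norm element in the closed convex set $Ax$ under strict convexity of $X^*$, and the usual subsequence principle promotes the convergence to the whole sequence. When $X^*$ is uniformly convex, weak convergence together with convergence of norms upgrades to strong convergence via Kadec-Klee. For (vi), I would argue by contradiction: if $\|A_{\lambda_{n_k}}^\vp x\|\le M$ along a subsequence, then $\lambda_{n_k} A_{\lambda_{n_k}}^\vp x\to 0$ in $X^*$, continuity of $J_\vp^{-1}$ at $0$ gives $J_\vp^{-1}(\lambda_{n_k} A_{\lambda_{n_k}}^\vp x)\to 0$, and the splitting forces $J_{\lambda_{n_k}}^\vp x\to x$; since each $J_{\lambda_n}^\vp x\in D(A)$, this places $x\in\overline{D(A)}$, contradicting the hypothesis. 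The principal obstacle throughout is that $J_\vp$ is not positively homogeneous of degree one for a general gauge $\vp$, so classical Yosida identities of the form $\|J^{-1}(\lambda z)\| = \lambda\|J^{-1}z\|$ are unavailable; I must substitute the qualitative fact that $J_\vp^{-1}$ is continuous at the origin with $J_\vp^{-1}(0)=0$, and track precisely how the norms $\|\lambda_n A_{\lambda_n}^\vp x_n\|$ scale in the double-sequence bookkeeping of (iv).
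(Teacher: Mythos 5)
Your items (ii), (iv), (v), and (vi) follow essentially the paper's own route and are sound modulo small slips (in (iv), for instance, the boundedness of $\{A_{\lambda_n}^\vp x_n\}$ comes from the hypothesis $A_{\lambda_n}^\vp x_n\rh y$, not from the $\limsup$ condition). The genuine gaps are in (i) and (iii). First, boundedness of $A_\la^\vp$ and $J_\la^\vp$ on bounded sets does not follow from the duality identity $\lan A_\la^\vp x, x-J_\la^\vp x\ran=\|A_\la^\vp x\|\,\|x-J_\la^\vp x\|$ together with ``a subsequence-plus-uniqueness argument,'' nor from ``applying (ii) against a fixed $x_0\in D(A)$'': estimate (ii) controls $\|A_\la^\vp x_0\|$ only at the single point $x_0$ and says nothing about $J_\la^\vp x$ for other $x$ in the bounded set. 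The missing step is to test the defining inclusion against an arbitrary fixed $(u,v)\in{\rm Gr}(A)$ and use monotonicity of $A$ to obtain $\vp(\|x_\la-x\|)\|x_\la-x\|\le \la\|v\|\left(\|u-x\|+\|x_\la-x\|\right)+\vp(\|x_\la-x\|)\|u-x\|$, whose superlinear left-hand side forces $\{x_\la : x\in B\}$ to be bounded; this inequality also drives (iii) and Lemma~\ref{L3}, so it cannot be skipped. Second, demicontinuity cannot be reduced to ``uniqueness in the defining inclusion'': if $x_n\to x_0$ and, after passing to a subsequence, $J_\la^\vp x_n\rh u_0$ and $A_\la^\vp x_n\rh v_0$, the pair $(u_0,v_0)$ does not automatically lie in ${\rm Gr}(A)$, because maximal monotone graphs are not sequentially closed under weak--weak convergence. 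One must first show $\lim_{n,m\to\infty}\lan v_n-v_m,u_n-u_m\ran=0$ (which follows from the monotonicity of $J_\vp$ and $A$ together with the strong convergence of $\{x_n\}$) and only then invoke the closure lemma (Barbu, Lemma~2.3); uniqueness of the solution of the resolvent inclusion identifies the limit only after that.

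The other step that would fail as written is the extension of $\lim_{\la\to0}J_\la^\vp x=x$ from $D(A)$ to $\overline{{\rm co}\,D(A)}$ by ``an equicontinuity/density argument.'' For a general gauge $\vp$ in a Banach space the resolvents $J_\la^\vp$ are not nonexpansive, and no equicontinuity uniform in $\la$ is available, so density alone does not transfer the convergence. The paper instead derives, for $x\in\overline{{\rm co}\,D(A)}$ and a weak cluster point $y$ of $\{J_\vp(x_{\la_n}-x)\}$, the bound $\limsup_{n\to\infty}\vp(\|x_{\la_n}-x\|)\|x_{\la_n}-x\|\le\lan y,u-x\ran$ for every $u\in D(A)$; since the right-hand side is affine in $u$, the bound persists for all $u$ in the closed convex hull, and the choice $u=x$ gives the conclusion. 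You will need this, or an equivalent device, to close (iii).
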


\begin{proof}
	(i)   We first show that $J_\lambda^\vp$ is single-valued. Given $x\in X$ and $\lambda>0,$ let $x_\lambda$ and $\tilde x_\lambda$  be solutions of  (\ref{Yosida-inclusion}). Take $y \in Ax_\lambda$ and  $\tilde y \in A\tilde x_\lambda$ such that 
	$$ J_\vp(x_\lambda- x) + \lambda y=0\quad \mbox{ and } \quad J_\vp(\tilde x_\lambda- x) + \lambda \tilde y=0.$$
	This along with the monotonicity of $A$ and $J_\vp$ implies 
	\begin{equation}\label{21}
		\lan  J_\vp(x_\lambda- x) - J_\vp(\tilde x_\lambda- x), (x_\lambda- x)-(\tilde x_\lambda- x)\ran = 0.
	\end{equation}
	Since $X$ is strictly convex,  it follows that $J_\vp$  is strictly monotone, i.e, for  $u_1, u_2\in X,$ we have
	$$\lan J_\vp u_1 - J_\vp u_2, u_1- u_2\ran > 0 \mbox{ if and only if } u_1 \ne u_2.$$  It  follows from (\ref{21}) that  
	$x_\lambda =\tilde x_\lambda.$ Thus, $J_\la^\vp$ is single-valued, and therefore  $A_\la^\vp$ is also single-valued.  It is easy to verify the monotonicity of  $ A_\la^\vp.$ 
	
	To show $A_\la^\vp$ is bounded, let $B\subset X$ be bounded. For each $x\in B$, let $x_\la = J_\la^\vp x.$ Let $(u, v) \in {\rm Gr} (A).$ Using (\ref{Yosida-inclusion}), it follows that
	$$\lan J_\vp(x_\la-x)+\la y_\la, x_\la-u\ran = 0,$$
	where $y_\la\in Ax_\la.$ This implies 
	$$ \lan J_\vp(x_\la-x), x_\la-u\ran = - \la \lan y_\la, x_\la - u\ran \le \la \lan v, u-x_\la \ran. $$ The last inequality follows from the monotonicity of $A$. It then  follows that 
	\begin{eqnarray}\label{22}
		\begin{aligned}
			\lan J_\vp(x_\la-x), x_\la-x\ran&= \lan J_\vp(x_\la-x), x_\la-u\ran+ \lan J_\vp(x_\la-x), u-x\ran\\
			&\le\la \lan v, u-x_\la \ran+\lan J_\vp(x_\la-x), u-x\ran\\
			&=\la \lan v, u-x \ran+\la \lan v, x-x_\la \ran+\lan J_\vp(x_\la-x), u-x\ran.
		\end{aligned}
	\end{eqnarray}
	This implies
	\begin{equation}\label{23}
		\vp(\|x_\la-x\|)\|x_\la-x\| \le \la\|v\| \left(\|u-x\|+\|x_\la-x\|\right) +\vp(\|x_\la-x\|)\|u-x\|.
	\end{equation}
	If $\{x_\la : x\in B\}$ is unbounded,  the inequality (\ref{23}) yields a contradiction.  Thus, $J_\la^\vp$ is bounded on $B.$  Since $J_\vp$ is bounded on $B$, it follows from (\ref{Yosida}) that $A_\la^\vp$ is also bounded on $B$. 
	
	Let $\{x_n\}\subset X$ be such that $x_n\to x_0\in X$ as $n\to \infty.$  Denote $u_n  = J_\la^\vp x_n$ and $v_n = A_\la^\vp x_n,$ so that 
	\begin{equation} 
		\label{Yosida2}J_\vp (u_n - x_n) +\la v_n = 0.
	\end{equation}  
	Since  $J_\la^\vp$  and $A_\la^\vp$  are bounded on bounded sets, both $\{u_n\}$ and $\{v_n\}$ are bounded.  Since  $J_\vp$ and $A$ are monotone, it follows from   
	\begin{eqnarray*}
		&\lan J_\vp(u_n-x_n)- J_\vp(u_m-x_m), (u_n-x_n) - (u_m-x_m)\ran\\
		&= -\la  \lan v_n-v_m,(u_n-x_n) - (u_m-x_m)\ran
	\end{eqnarray*}
	that
	$$\lim_{n, m\to\infty} \lan v_n - v_m, u_n-u_m\ran = 0$$ and
	$$\lim_{n, m\to\infty} \lan J_\vp(u_n-x_n)- J_\vp(u_m-x_m), (u_n-x_n) - (u_m-x_m)\ran = 0.$$
	Passing to subsequences, we may assume that $u_n\rh u_0$ in $X$,  $v_n\rh v_0$ in $X^*,$ and $J_\vp(u_n-x_n) \rh w_0$ in $X^*$ for some $u_0\in X$ and some $v_0, w_0\in X^*.$ 	By   \cite[Lemma~2.3]{BA}, it follows that $(u_0, v_0)\in {\rm Gr}(A)$ and $(u_0-x_0, w_0)\in {\rm Gr}(J_\vp).$ Using all these in (\ref{Yosida2}), we get 
	$J_\vp(u_0-u_0) + \la v_0 = 0,$ which implies $u_0 = J_\la^\vp x_0$ and $v_0 = A_\la^\vp x_0,$ i.e., $ J_\la^\vp x_n \rh  J_\la^\vp x_0$ and $A_\la^\vp x_n \rh  A_\la^\vp x_0$ as $n\to\infty$. This proves the demicontinuity of $J_\la$  and $A_\la$. \\
	\vspace{-2ex}
	
	(ii)  Let $x\in D(A)$ and $\lambda>0$. Let $y\in Ax$ and $x_\la = J_\la^\vp x.$  Then
	\begin{eqnarray*}
		0&\le& \lan y - A_\la x, x- x_\la\ran\\
		&= & \lan y, x- x_\la\ran -\frac{1}{\la}\vp(\|x- x_\la\|) \|x- x_\la\|\\
		&\le& \|y\| \|x- x_\la\| - \frac{1}{\la}\vp(\|x- x_\la\|) \|x- x_\la\|,
	\end{eqnarray*}
	which implies 
	$\vp(\|x- x_\la\|) \le \la \|y\|,$  and therefore $$\|A_\la^\vp x\|= \dfrac{1}{\la}\|J_\vp(x- x_\la)\|\le \|y\|.$$ Consequently,
	we have
	$\|A_\la^\vp x\| \le |Ax|:= \inf\{\|y\|: y\in Ax\}.$\\
	\vspace{-2ex}
	
	(iii) The boundedness of $J_\la^\vp$ on bounded subsets of $X$  and its demicontinuity are already proved in  (i).  Let  $ x\in\overline{\textrm{co}D(A)}$ and $(u, v)\in {\rm Gr}(A).$  Following the arguments that lead to  (\ref{23}), we find that 
	$\{x_\la-x: \la>0\}$ is bounded, and  therefore $\{J_\vp(x_\la-x): \la>0\}$ is bounded.  Let $\{\la_n\}\subset (0, \infty)$ be such that $\la_n \to 0.$  Let $y\in X^*$ be such that $J_\vp({x_{\la_n}}-x)\rh y$ in $X^*$. 
	Then (\ref{22}) yields
	$$ \limsup_{n\to\infty} 	\vp(\|x_{\la_n}-x\|)\|x_{\la_n}-x\|  \le \lan y, u-x\ran.$$
	It is clear that this argument applies to all $u\in \overline{\textrm{co}D(A)}$. Taking $u= x$, we obtain
	$$ \lim_{n\to\infty} 	\vp(\|x_{\la_n}-x\|)\|x_{\la_n}-x\|  =0.$$ By the homeomorphic property of the gauge function $\vp$, it follows that we must have
	$x_{\la_n}\to x$ as $n\to\infty.$ This completes the proof of (iii). \\
	\vspace{-2ex}
	
	(iv)  Let $u_n = J_{\lambda_n}^\vp x_n$ for all $n$. 
	Since $\{A_{\lambda_n}^\vp x_n\}$ is bounded, it follows that 
	$$\vp(\|x_n - u_n\|) =\vp( \|x_n -J_{\lambda_n}^\vp x_n\|)= \|J_\vp(x_n -J_{\lambda_n}^\vp x_n) \|= \la_n \|A_{\lambda_n}^\vp x_n\|\to 0$$
	as $n\to\infty$. This implies  $ \|x_n - u_n\|\to 0$ as $n\to\infty.$ 
	Since
	\begin{eqnarray*}
		\lan A_{\lambda_n}^\vp x_n- A_{\lambda_m}^\vp x_m, x_n - x_m \ran &=&\lan A_{\lambda_n}^\vp x_n- A_{\lambda_m}^\vp x_m, u_n - u_m \ran \\
		&&+\lan A_{\lambda_n}^\vp x_n- A_{\lambda_m}^\vp x_m, (x_n -u_n) - (x_m-u_m)\ran  
	\end{eqnarray*}
	and $A$ is monotone, it follows as in Br\'ezis et al.~\cite{BCP} that 
	$$\lim_{n, m\to\infty}\lan A_{\lambda_n}^\vp x_n- A_{\lambda_m}^\vp x_m, x_n - x_m \ran=0 \mbox{ and } \lim_{n, m\to\infty}\lan A_{\lambda_n}^\vp x_n- A_{\lambda_m}^\vp x_m, u_n - u_m\ran =0.$$ The conclusion of (iv) now follows from \cite[Lemma~2.3]{BA}. \\
	\vspace{-2ex}
	
	(v) Let $x\in D(A)$.  Since $X^*$   is reflexive and strictly convex and $Ax$ is a closed and convex subset of $X^*,$ it follows that there exists a unique element of  $Ax$, denoted by  $A^{\{0\}}x$, such that $\|A^{\{0\}}x\| = \inf\{\|x^*\|: x^*\in Ax\}.$  
	Let $\{\la_n\}\subset (0, \infty)$ be such that $\la_n\to 0 $ and $A_{\la_n}^\vp x\rh y$ in $ X^*$ as $n\to\infty$.  As in the proof of (iv), with $x_n = x$, we have $y\in Ax.$  In view of part (ii), it follows that
	$$\|y\| \le \liminf_{n\to\infty}\|A_{\la_n}^\vp x\| \le \limsup_{n\to\infty}\|A_{\la_n}^\vp x\|\le  \|A^{\{0\}}x\|, $$  and therefore we must have $y = A^{\{0\}}x$  and  $A_{\la_n}^\vp x\rh A^{\{0\}}x$ in $ X^*.$  Moreover, if $X^*$ is uniformly convex, then, by  \cite[Lemma~1.1]{BA}, we get
	$A_{\la_n}^\vp x\to A^{\{0\}}x$ in $ X^*.$ \\
	\vspace{-2ex}
	
	(vi) Suppose, on the contrary,  that there is  a sequence $\{\lambda_n\}$ with $\lambda_n\to 0$ and an element $x\not\in\overline{D(A)}$ such that $\{\|A_{\lambda_n}^\vp x\|\}$ is bounded. Let $R>0$ be such that $\|A_{\lambda_n}^\vp x\|\le R$ for all $n$.  Then, by (\ref{Yosida}), we have 
	$$\vp(\|x- J_{\lambda_n}^\vp x\|) =\|J_\vp(x- J_{\lambda_n}^\vp x)\|\le R\lambda_n .$$ Since  $\vp^{-1}$ is also a gauge function, we get
	$J_{\lambda_n}^\vp x \to x\mbox{  as } n\to\infty.$ This implies $x\in \overline{D(A)},$ a contradiction. 
\end{proof}

A proof of the following lemma  for $\vp(r) = r$  can be found in Boubakari and Kartsatos \cite{BK}.  Since we are dealing here with an arbitrary gauge function $\vp,$  we  provide  a complete proof. 

\begin{lemma}\label{L3}
	Let $A:X\supset D(A)\to 2^{X^*}$ be  maximal monotone  and $G\subset X$ be  bounded. Let  $0<\lambda_1<\lambda_2$. Then there exists  a constant $K$, independent of $\lambda$,  such that 
	$$\|A_\lambda^\vp x\| \le K$$
	for all $x\in \overline{ G}$ and $\lambda\in [\lambda_1, \lambda_2].$
\end{lemma}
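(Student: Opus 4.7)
The plan is to adapt the boundedness argument already carried out for $A_\lambda^\vp$ in the proof of Proposition~\ref{prop0}(i), keeping explicit track of the dependence on $\lambda$ so that every estimate is uniform over $[\lambda_1,\lambda_2]$. Two features of the interval will be used in essential ways: the upper bound $\lambda_2$ controls the source terms on the right-hand side of the monotonicity estimate, while the positive lower bound $\lambda_1$ prevents the prefactor $1/\lambda$ in the definition $A_\lambda^\vp x = \lambda^{-1} J_\vp(x - J_\lambda^\vp x)$ from blowing up.

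First I would fix some $(u,v)\in{\rm Gr}(A)$ (nonempty, as $A$ is maximal monotone) and set $M := \sup_{x\in\overline G}\|u-x\| < \infty$, which is finite because $G$ is bounded. Writing $x_\lambda = J_\lambda^\vp x$ and $y_\lambda = A_\lambda^\vp x \in Ax_\lambda$, I would replay the computation leading to (\ref{22})--(\ref{23}), now estimating $\lambda$ by $\lambda_2$ on the right, to obtain for every $x\in\overline G$ and $\lambda\in[\lambda_1,\lambda_2]$
\begin{equation*}
\vp(\|x_\lambda - x\|)\bigl(\|x_\lambda - x\| - M\bigr) \le \lambda_2 \|v\|\bigl(M + \|x_\lambda - x\|\bigr).
\end{equation*}
A standard contradiction argument then yields a uniform bound $\|x_\lambda - x\| \le M'$: if $r := \|x_\lambda - x\|$ were unbounded along some sequence, the left-hand side would grow like $\vp(r)\cdot r$ while the right-hand side is $O(r)$, and since $\vp(r)\to\infty$ this is impossible.

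With the uniform bound $\|x_\lambda - x\|\le M'$ in hand, I would conclude by the direct estimate
\begin{equation*}
\|A_\lambda^\vp x\| = \frac{1}{\lambda}\|J_\vp(x - x_\lambda)\| = \frac{\vp(\|x_\lambda - x\|)}{\lambda} \le \frac{\vp(M')}{\lambda_1} =: K,
\end{equation*}
where the monotonicity of $\vp$ gives the numerator bound and $\lambda_1 > 0$ keeps the denominator away from zero. The only real subtlety is this final division, which explains precisely why the hypothesis $\lambda_1 > 0$ cannot be dropped. Beyond that the argument is a quantitative bookkeeping of the reasoning already present in Proposition~\ref{prop0}(i), and I do not anticipate any deeper obstacle.
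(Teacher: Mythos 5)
Your proposal is correct and follows essentially the same route as the paper: both derive the key estimate \eqref{23} from the monotonicity of $A$ with a fixed $(u,v)\in{\rm Gr}(A)$, bound $\lambda$ above by $\lambda_2$ to get a uniform bound on $\vp(\|x_\lambda-x\|)$ (you bound $\|x_\lambda-x\|$ first and then apply the increasing function $\vp$, which is equivalent), and finish with the division by $\lambda\ge\lambda_1$. No gaps.
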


 \begin{proof} For every $x\in X$, we have
$$	A_\lambda^\vp x  = \frac{1}{\lambda}J_\vp(x- x_\la), $$
where $x_\la = J_\lambda^\vp x.$ Let $(u, v) \in {\rm Gr}(A)$.   In view of (\ref{23}) in the proof of (i) in  Proposition~\ref{prop0}, we have
\begin{eqnarray*}
	\vp(\|x_\la-x\|)\|x_\la-x\| &\le& \la\|v\| \left(\|u-x\|+\|x_\la-x\|\right) +\vp(\|x_\la-x\|)\|u-x\|\\
	&\le& \la_2\|v\| \left(\|u-x\|+\|x_\la-x\|\right) +\vp(\|x_\la-x\|)\|u-x\|.
\end{eqnarray*}
By the properties of the gauge function $\vp$, it follows that
$\vp(\|x_\la -x\|) $ must be bounded, i.e., there exists a constant $K_0>0$ such that 
$$\vp(\|x_\la -x\|) \le  K_0 $$ for all $x\in \overline{ G}$ and all $\lambda \in [\lambda_1, \lambda_2].$  Consequently, we have
$$\|A_\lambda^\vp x \| = \frac{1}{\lambda}\vp(\|x_\la -x\|) \le \frac{1}{\lambda_1} K_0 =:K$$
for all $x\in \overline{ G}$ and all $\lambda \in [\lambda_1, \lambda_2].$
\end{proof}

By a well-known renorming theorem due to Troyanski \cite{Troyanski},
 a reflexive Banach space $X$ can be renormed with an equivalent
norm with respect to which both $X$ and $X^*$ become locally uniformly convex
(therefore strictly convex). With such a renorming, the  duality mapping $J_\varphi $ is a homeomorphism  from $X$ onto $X^*.$
Henceforth, we assume that  both $X$ and  $X^*$ are reflexive and locally uniformly convex.

The following lemma involving  $A_\lambda^\vp$ and $J_\lambda^\vp$  plays an important role in the sequel. Its proof is omitted here because of its similarity to Lemma~1  in \cite{AK},  except that, for the general $\vp$ here, we  must make use of $$x = J_{\lambda}^\vp x+ J_\vp^{-1} (\la A_{\la}^\vp x)  \mbox{ and } \lan A_\lambda^\vp x,   J_\vp^{-1}(\la A_{\lambda}^\vp x)\ran =  \vp^{-1}(\la \|A_\lambda^\vp x\|) \|A_\lambda^\vp x\|, \quad x\in X.$$
The lemma for  $A_\lambda$ and $J_\lambda$ is essentially due to Br\'ezis et al. \cite{BCP}.

\begin{lemma}\label{L1}
Let
	$A:X\supset D(A)\to 2^{X^*}$ and $S:X\supset D(S)\to 2^{X^*}$
	be maximal monotone operators such that $0\in D(A)\cap D(S)$ and
	$0\in S(0)\cap A(0)$. Assume
	that $A+S$ is maximal monotone and
	that there is a sequence $\{\lambda_n\}\subset
	(0,\infty)$ such that $\lambda_n\to 0$, and a sequence
	$\{x_n\}\subset D(S)$ such that $x_n\rightharpoonup x_0\in X$ and
	$A^\vp_{\lambda_n}x_n+w^*_n\rightharpoonup y_0^*\in X^*$, where
	$w^*_n\in Sx_n$. Then the following  statements are true.
	\begin{enumerate}
		\item The inequality
		\begin{equation} \label{L12}
			\lim_{n\to\infty}\langle A^\vp_{\lambda_n}x_n+w^*_n,x_n-x_0\rangle < 0
		\end{equation}
		is impossible.
		\item If
		\begin{equation} \label{L13}
			\lim_{n\to\infty}\langle A^\vp_{\lambda_n}x_n+w^*_n,x_n-x_0\rangle = 0,
		\end{equation}
		then $x_0\in D(A+S)$ and $y_0^*\in (A+S)x_0$.
	\end{enumerate}
\end{lemma}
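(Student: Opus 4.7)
The plan is to adapt the Br\'ezis--Crandall--Pazy argument to a general gauge $\vp$, the extra work being careful control of the quantity $\la_n\|A_{\la_n}^\vp x_n\|$. I set $u_n := J_{\la_n}^\vp x_n \in D(A)$, so that $A_{\la_n}^\vp x_n \in Au_n$ and $x_n - u_n = J_\vp^{-1}(\la_n A_{\la_n}^\vp x_n)$. The pairing identity $\lan A_{\la_n}^\vp x_n, J_\vp^{-1}(\la_n A_{\la_n}^\vp x_n)\ran = \vp^{-1}(\la_n\|A_{\la_n}^\vp x_n\|)\|A_{\la_n}^\vp x_n\|$ recalled immediately before the lemma then yields
\[
\lan A_{\la_n}^\vp x_n, x_n - u_n\ran \;=\; \vp^{-1}\bigl(\la_n\|A_{\la_n}^\vp x_n\|\bigr)\,\|A_{\la_n}^\vp x_n\| \;\ge\; 0.
\]
Picking any $v \in D(A)\cap D(S)$ with $v^* \in Av$ and $s^* \in Sv$, I add the monotonicity inequalities $\lan A_{\la_n}^\vp x_n - v^*, u_n - v\ran \ge 0$ and $\lan w_n^* - s^*, x_n - v\ran \ge 0$, substitute $u_n - v = (x_n - v) - (x_n - u_n)$, and peel off $\alpha_n := \lan A_{\la_n}^\vp x_n + w_n^*, x_n - x_0\ran$ to reach the key inequality
\begin{equation*}
\alpha_n \;\ge\; \vp^{-1}\bigl(\la_n\|A_{\la_n}^\vp x_n\|\bigr)\,\|A_{\la_n}^\vp x_n\| + \lan v^*, u_n - v\ran + \lan s^*, x_n - v\ran - \lan A_{\la_n}^\vp x_n + w_n^*, x_0 - v\ran.
\end{equation*}

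Specializing to $v = 0$, $v^* = 0$, $s^* = 0$ (legitimate because $0\in D(A)\cap D(S)$ and $0\in A(0)\cap S(0)$), this collapses to
\[
0 \;\le\; \vp^{-1}\bigl(\la_n\|A_{\la_n}^\vp x_n\|\bigr)\,\|A_{\la_n}^\vp x_n\| \;\le\; \alpha_n + \lan A_{\la_n}^\vp x_n + w_n^*, x_0\ran,
\]
whose right side is bounded since $\lim_n\alpha_n$ exists in $\mathbf R$ and $A_{\la_n}^\vp x_n + w_n^* \rh y_0^*$. I will then promote this product bound to the sharper $\la_n\|A_{\la_n}^\vp x_n\|\to 0$ via a dichotomy: otherwise some subsequence has $\la_n\|A_{\la_n}^\vp x_n\| \ge \var > 0$, whence $\|A_{\la_n}^\vp x_n\| \to \infty$ (since $\la_n\to 0$) while $\vp^{-1}(\la_n\|A_{\la_n}^\vp x_n\|) \ge \vp^{-1}(\var)>0$, contradicting the product bound. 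Consequently $\|x_n - u_n\| = \vp^{-1}(\la_n\|A_{\la_n}^\vp x_n\|)\to 0$, and in particular $u_n \rh x_0$.

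Armed with this strong convergence, I take $\liminf_{n\to\infty}$ in the key inequality; the $\vp^{-1}$-term is nonnegative and is dropped, and using $u_n, x_n \rh x_0$ together with $A_{\la_n}^\vp x_n + w_n^* \rh y_0^*$, I obtain, for every $(v, z^*) \in {\rm Gr}(A+S)$ written as $z^* = v^* + s^*$ with $v^* \in Av$, $s^* \in Sv$,
\[
\lan y_0^* - z^*, x_0 - v\ran \;\ge\; -\alpha_0, \qquad \alpha_0 := \lim_n \alpha_n.
\]
For part (i), a hypothetical $\alpha_0 < 0$ makes the displayed inequality a \emph{strict} monotonicity relation for $(x_0, y_0^*)$ against every pair in ${\rm Gr}(A+S)$; maximality of $A+S$ then forces $(x_0, y_0^*) \in {\rm Gr}(A+S)$, but testing at $(v, z^*) = (x_0, y_0^*)$ gives $0 \ge -\alpha_0$, contradicting $\alpha_0 < 0$. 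For part (ii), $\alpha_0 = 0$ turns the displayed inequality into the defining variational condition for membership in ${\rm Gr}(A+S)$, and maximality of $A+S$ yields $x_0 \in D(A+S)$ and $y_0^* \in (A+S)x_0$. The principal obstacle is the passage from the product bound to $\la_n\|A_{\la_n}^\vp x_n\|\to 0$: in the normalized case $\vp(r)=r$ treated in \cite{AK} the bound reads $\la_n\|A_{\la_n}x_n\|^2 \le C$, which is easy to manipulate, whereas here we control only the mixed quantity $\vp^{-1}(\la_n\|A_{\la_n}^\vp x_n\|)\,\|A_{\la_n}^\vp x_n\|$, and the dichotomy above---relying on $\vp^{-1}$ being a gauge function---is precisely what bridges the gap.
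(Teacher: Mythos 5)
Your argument is correct and is exactly the route the paper intends: it omits the proof as "similar to Lemma~1 in \cite{AK}" modulo the two identities $x = J_{\lambda}^\vp x + J_\vp^{-1}(\la A_{\la}^\vp x)$ and $\lan A_\lambda^\vp x, J_\vp^{-1}(\la A_{\lambda}^\vp x)\ran = \vp^{-1}(\la\|A_\lambda^\vp x\|)\|A_\lambda^\vp x\|$, and those are precisely the backbone of your adaptation, with the dichotomy via the gauge $\vp^{-1}$ supplying the step that replaces the $\la_n\|A_{\la_n}x_n\|^2$ bound of the normalized case. No gaps.
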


\begin{definition}\label{D3} \rm
	An operator $A:X \supset D(A) \to 2^{X^*}$ is said to be ``\textit{strongly
		quasibounded}" if for every $S>0$ there exists $K(S)>0$ such that
	$\|x\| \le S$  and $\langle x^*, x \rangle \le S$
 for some  $x^*\in Ax	$
	imply $\|x^*\| \le K(S)$.
\end{definition}
It is obvious that a bounded operator is strongly quasibounded.  With regard to possibly unbounded operators, Browder and Hess \cite{BrowderHess1972}  and Pascali and Sburlan \cite{PS} have shown  that a monotone operator $A$
with $0\in{\rm Int} D(A)$ is strongly quasibounded.  The  following lemma with the particular case  $\vp(r) = r$ addressed in  Kartsatos and Quarcoo\cite[Lemma D]{Kartsatos2008} is needed in the sequel.

\begin{lemma}\label{L2}
	Let $A:X \supset D(A) \to 2^{X^*}$ be a strongly quasibounded
	maximal monotone operator such that $0 \in A(0)$. Let $\{\lambda_n\}
	\subset (0, \infty)$ and $\{x_n\} \subset X$ be such that
	$$
	\|x_n\| \le S \mbox{ and } \langle A_{\lambda_n}^\vp x_n, x_n \rangle \le
	S_1\quad\text{for all } n,
	$$
	where $S, S_1$ are positive constants. Then there exists a number
	$K>0$ such that $\|A_{\lambda_n}^\vp x_n\| \le K$ for all $n$.
\end{lemma}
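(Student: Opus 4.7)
My plan is to apply the strong quasiboundedness of $A$ at the resolvent point $J_{\lambda_n}^\vp x_n$, exploiting the fact that $A_{\lambda_n}^\vp x_n\in A(J_{\lambda_n}^\vp x_n)$ by construction. To invoke that property, I need uniform upper bounds on both $\|J_{\lambda_n}^\vp x_n\|$ and $\lan A_{\lambda_n}^\vp x_n, J_{\lambda_n}^\vp x_n\ran$.

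First, I will combine the splitting $x_n = J_{\lambda_n}^\vp x_n + J_\vp^{-1}(\la_n A_{\lambda_n}^\vp x_n)$ with the identity $\lan A_{\lambda_n}^\vp x_n, J_\vp^{-1}(\la_n A_{\lambda_n}^\vp x_n)\ran = \vp^{-1}(\la_n\|A_{\lambda_n}^\vp x_n\|)\|A_{\lambda_n}^\vp x_n\|$ recorded in the discussion preceding Lemma~\ref{L1}. This produces the decomposition
$$\lan A_{\lambda_n}^\vp x_n, x_n\ran = \lan A_{\lambda_n}^\vp x_n, J_{\lambda_n}^\vp x_n\ran + \vp^{-1}(\la_n\|A_{\lambda_n}^\vp x_n\|)\|A_{\lambda_n}^\vp x_n\|.$$
Because $0\in A(0)$ and $A_{\lambda_n}^\vp x_n\in A(J_{\lambda_n}^\vp x_n)$, monotonicity of $A$ yields $\lan A_{\lambda_n}^\vp x_n, J_{\lambda_n}^\vp x_n\ran\ge 0$. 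Both terms on the right are therefore nonnegative, and the hypothesis $\lan A_{\lambda_n}^\vp x_n, x_n\ran\le S_1$ simultaneously gives
$$0\le \lan A_{\lambda_n}^\vp x_n, J_{\lambda_n}^\vp x_n\ran\le S_1 \quad\text{and}\quad \vp^{-1}(\la_n\|A_{\lambda_n}^\vp x_n\|)\|A_{\lambda_n}^\vp x_n\|\le S_1.$$

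Next, I will argue by contradiction, supposing that along some subsequence (still indexed by $n$) one has $\|A_{\lambda_n}^\vp x_n\|\to\infty$. The product estimate above then forces $\vp^{-1}(\la_n\|A_{\lambda_n}^\vp x_n\|)\to 0$, and since $\vp^{-1}$ is itself a gauge function, this gives the norm control $\|x_n-J_{\lambda_n}^\vp x_n\| = \vp^{-1}(\la_n\|A_{\lambda_n}^\vp x_n\|)\to 0$. Consequently $\|J_{\lambda_n}^\vp x_n\|\le S+1$ for all $n$ sufficiently large. Setting $\wi S := \max\{S+1, S_1\}$ and applying the strong quasiboundedness of $A$ to the pair $(J_{\lambda_n}^\vp x_n, A_{\lambda_n}^\vp x_n)\in {\rm Gr}(A)$ produces a constant $K(\wi S)$ with $\|A_{\lambda_n}^\vp x_n\|\le K(\wi S)$ for those $n$, contradicting the assumed blow-up. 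Hence $\{\|A_{\lambda_n}^\vp x_n\|\}$ is bounded.

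The subtle point is the coupling between the free parameter $\la_n$ and the quantity $\|A_{\lambda_n}^\vp x_n\|$: without any a priori control on $\la_n$, the splitting alone cannot bound $\|J_{\lambda_n}^\vp x_n\|$. The argument succeeds because any potential blow-up of $\|A_{\lambda_n}^\vp x_n\|$ is automatically balanced by $\vp^{-1}(\la_n\|A_{\lambda_n}^\vp x_n\|)\to 0$ via the derived product bound, which is precisely what keeps $J_{\lambda_n}^\vp x_n$ bounded long enough for strong quasiboundedness of $A$ to close the loop.
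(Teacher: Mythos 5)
Your proof is correct and follows essentially the same route as the paper's: the splitting $x_n = J_{\lambda_n}^\vp x_n + J_\vp^{-1}(\lambda_n A_{\lambda_n}^\vp x_n)$ combined with monotonicity and $0\in A(0)$ to bound both $\langle A_{\lambda_n}^\vp x_n, J_{\lambda_n}^\vp x_n\rangle$ and $\vp^{-1}(\lambda_n\|A_{\lambda_n}^\vp x_n\|)\|A_{\lambda_n}^\vp x_n\|$ by $S_1$, then a contradiction via strong quasiboundedness applied at the resolvent point. The only cosmetic difference is that you deduce $\|x_n - J_{\lambda_n}^\vp x_n\|\to 0$ along the blow-up subsequence where the paper settles for the bound $\|x_n - J_{\lambda_n}^\vp x_n\|\le S_1$; either suffices to bound $\{J_{\lambda_n}^\vp x_n\}$.
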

\begin{proof}
Denote $w_n= A_{\lambda_n}^\vp x_n \mbox{ and } u_n =J_{\la_n}^\vp x_n $ for all $n.$  Then we have
$$w_n \in A u_n \mbox{ and }x_n = u_n+  J_\vp^{-1} (\la_nw_n).$$
In view of $0\in A(0)$, we obtain
\begin{eqnarray*}
	0\le \lan w_n, u_n \ran  &=& \lan w_n , x_n -  J_\vp^{-1}(\la_nw_n)\ran\\
	&=&\lan w_n , x_n \ran -\lan w_n, J_\vp^{-1}(\la_n w_n)\ran\\
	&=&\lan w_n , x_n \ran -\vp^{-1}(\la_n\| w_n\|) \|w_n\|\\
	&\le&S_1-\vp^{-1}(\la_n\| w_n\|) \|w_n\|.
\end{eqnarray*}
This yields $\lan w_n, u_n\ran \le S_1$ and $\vp^{-1}(\la_n\| w_n\|) \|w_n\|\le S_1$ for all $n$.  Suppose $\{w_n\}$ is unbounded. Then there exists a subsequence, denoted again by $\{w_n\}$, such that  $\|w_n\| \to \infty$ and $1\le \|w_n\|$ for all $n.$   Consequently, 
$\vp^{-1}(\la_n\| w_n\|)\le S_1$ for all $n$, and since $x_n  = u_n +J_\vp^{-1} (\lambda_n w_n)$, it follows that $$ \lambda_n \|w_n\| = \|J_\vp (x_n-u_n)\|= \vp(\|x_n -u_n\|) .$$ This implies
$\|x_n -u_n\| = \vp^{-1}(\la_n\| w_n\|)\le S_1$ for all $n.$  Since $\{x_n\}$ is bounded, we obtain the boundedness of $\{u_n\}$  and $\{\lan w_n, u_n\ran \},$  which contradicts the strong quasiboundedness of $A.$   Consequently,  $\{w_n\}$ is bounded.
\end{proof}

For the rest of this paper,  we take the gauge function $\varphi(r) = r^{p-1}, \;p>1.$    For the special case  $\vp(r) = r,$  the reader can find   proofs of Lemma~\ref{L4}  in Kartsatos and Skrypnik \cite{KartsatosSkrypnik2005a}  when $0\in A(0)$ and in  Asfaw and Kartsatos \cite{ASK2012},   without the condition $0\in A(0)$.  We note that Zhang and Chen in \cite[Lemma~2.7]{ZC} proved the continuity of $x\mapsto A_\lambda x$   on $ D(A)$ for each $\lambda>0$, also  without the condition $0\in A(0)$.   In  \cite[Lemma~6]{ASK2012}, however,   the continuity of $x\mapsto A_\lambda x$ on $X$ is used with no mention of its validity.   We next provide  a detailed proof of the  continuity of the mapping $(\lambda, x)\mapsto A_\lambda^\vp x$ on $(0, \infty) \times X.$

\begin{lemma}\label{L4}
	Let $A:X\supset D(A)\to 2^{X^*}$ be  a maximal monotone operator.  Then the mapping $(\lambda, x)\mapsto A_\lambda^\vp x$ is continuous on $(0, \infty) \times X$. 
\end{lemma}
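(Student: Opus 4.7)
Fix sequences $\la_n\to\la_0>0$ and $x_n\to x_0$ in $X$; the goal is $A_{\la_n}^\vp x_n\to A_{\la_0}^\vp x_0$ strongly in $X^*$. Set $u_n:=J_{\la_n}^\vp x_n$, $v_n:=A_{\la_n}^\vp x_n$, and $w_n:=J_\vp^{-1}(v_n)$, so that $v_n\in Au_n$ and, by (\ref{splitting}), $x_n = u_n + \la_n^{q-1}w_n$. Since $\{x_n\}$ is bounded and $\la_n$ eventually lies in a compact subinterval of $(0,\infty)$, Lemma~\ref{L3} gives $\sup_n\|v_n\|<\infty$; consequently $\{w_n\}$ and $\{u_n\}$ are also bounded. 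Passing to a subsequence, I may assume $u_n\rh u_*$, $v_n\rh v_*$, $w_n\rh w_*$; once these weak limits are identified as the expected ones, the standard subsequence argument upgrades to convergence of the whole sequence.

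The first task is to extract a Cauchy-type identity by playing two monotonicities against each other. Substituting $u_n - u_m = (x_n-x_m) - (\la_n^{q-1}w_n - \la_m^{q-1}w_m)$ and decomposing $\la_n^{q-1}w_n - \la_m^{q-1}w_m = \la_n^{q-1}(w_n-w_m) + (\la_n^{q-1}-\la_m^{q-1})w_m$ in $\lan v_n-v_m, u_n-u_m\ran \ge 0$, I obtain
\[ 0\le \la_n^{q-1}\lan v_n-v_m, w_n-w_m\ran \le \lan v_n-v_m, x_n-x_m\ran - (\la_n^{q-1}-\la_m^{q-1})\lan v_n-v_m, w_m\ran, \]
where the lower bound uses monotonicity of $J_\vp^{-1}$. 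Both terms on the right vanish as $n,m\to\infty$ (from $x_n\to x_0$, $\la_n\to\la_0$, and boundedness of $\{v_n\},\{w_m\}$), and since $\la_n^{q-1}\to\la_0^{q-1}>0$,
\[ \lim_{n,m\to\infty}\lan v_n-v_m, w_n-w_m\ran = 0. \]

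The decisive step, and the one I expect to be the main obstacle, is turning this into \emph{strong} convergence. Applying \cite[Lemma~2.3]{BA} to the maximal monotone graph of $J_\vp^{-1}$ at the pairs $(v_n,w_n)$ identifies $w_* = J_\vp^{-1}(v_*)$, and therefore $\lan v_*,w_*\ran = \vp(\|w_*\|)\|w_*\| = \|w_*\|^p$. Expanding $\lan v_n-v_m, w_n-w_m\ran$ into four bilinear pieces and passing first in $m$, then in $n$, using the weak convergences of $v_m$ and $w_m$, yields $\ls \lan v_n,w_n\ran \le \lan v_*,w_*\ran$; that is, $\ls\|w_n\|^p \le \|w_*\|^p$. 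Combined with weak lower semicontinuity $\|w_*\|\le \li\|w_n\|$, this forces $\|w_n\|\to\|w_*\|$. Local uniform convexity of $X$ then promotes $w_n\rh w_*$ with matching norms to $w_n\to w_*$ strongly, and continuity of the homeomorphism $J_\vp:X\to X^*$ delivers $v_n = J_\vp w_n \to J_\vp w_* = v_*$ strongly in $X^*$.

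Finally, from $x_n = u_n + \la_n^{q-1}w_n$ together with $x_n\to x_0$, $w_n\to w_*$, and $\la_n\to\la_0$, we get $u_n\to u_* := x_0 - \la_0^{q-1}w_*$ strongly. Closedness of ${\rm Gr}(A)$ then yields $v_*\in Au_*$, while the identity $x_0 = u_* + \la_0^{q-1}J_\vp^{-1}(v_*)$ is exactly the defining inclusion (\ref{Yosida-inclusion}) for the pair $(J_{\la_0}^\vp x_0, A_{\la_0}^\vp x_0)$. The uniqueness asserted in Proposition~\ref{prop0}(i) forces $u_* = J_{\la_0}^\vp x_0$ and $v_* = A_{\la_0}^\vp x_0$, and the subsequence argument completes the proof.
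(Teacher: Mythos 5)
Your proof is correct, but it follows a genuinely different route from the paper's. The paper argues in two stages: it first fixes $\lambda_0$ and proves continuity of $x\mapsto A_{\lambda_0}^\vp x$ by pairing $A_{\lambda_0}^\vp x_n$ directly against the target $A_{\lambda_0}^\vp x_0$ (no weak compactness or subsequences), concluding strong convergence from $\lan A_{\lambda_0}^\vp x_n-A_{\lambda_0}^\vp x_0,\,J_\vp^{-1}(A_{\lambda_0}^\vp x_n)-J_\vp^{-1}(A_{\lambda_0}^\vp x_0)\ran\to 0$ via the duality-mapping property in \cite[Proposition~2.17]{Cioranescu}; it then reduces the joint limit to this case through the algebraic identity $A_{\lambda_n}^\vp x_n=A_{\lambda_0}^\vp\bigl(x_n+(\lambda_0^{q-1}-\lambda_n^{q-1})J_\vp^{-1}(A_{\lambda_n}^\vp x_n)\bigr)$, which rests on the closed form \eqref{closed-form}. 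You instead attack the joint limit head-on: a Cauchy-type double-index estimate obtained by playing the monotonicity of $A$ against that of $J_\vp^{-1}$, then \cite[Lemma~2.3]{BA} applied to ${\rm Gr}(J_\vp^{-1})$ to identify the weak limit, norm convergence of $w_n$ from $\limsup\lan v_n,w_n\ran\le\lan v_*,w_*\ran$ plus weak lower semicontinuity, the Kadec--Klee property of the locally uniformly convex norm to upgrade to strong convergence, and finally the uniqueness in Proposition~\ref{prop0}(i) to pin down the limit. The trade-off: the paper's argument is shorter and isolates the $x$-continuity as a reusable statement, but is wedded to $\vp(r)=r^{p-1}$ through \eqref{closed-form} (as its own remark acknowledges); yours avoids the closed-form identity for $A_\lambda^\vp$ (though the decomposition of $u_n-u_m$ still leans on the homogeneity of $J_\vp$ via \eqref{splitting}) and yields convergence of the resolvents $J_{\lambda_n}^\vp x_n$ as a byproduct. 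One stylistic note: the ``four bilinear pieces'' passage is exactly the content of the second conclusion of \cite[Lemma~2.3]{BA} ($\lan v_n,w_n\ran\to\lan v_*,w_*\ran$), so you could cite it rather than re-derive it.
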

\begin{proof}
We first prove the continuity of  $x\mapsto A_{\lambda_0}^\vp x$ on $X$ for each fixed $\lambda_0>0$. To this end, let $\{x_n\}\subset X$ be such that $x_n\to x_0\in X$.   
By Lemma~\ref{L3},  we have the boundedness of  $\{A_{\lambda_0}^\vp x_n\},$ and therefore
\begin{equation}\label{100}
	\lim\limits_{n\to\infty}\lan A_{\lambda_0}^\vp x_n- A_{\lambda_0}^\vp x_0, x_n-x_0\ran = 0.
\end{equation}
We know that 
\begin{equation}\label{101}x_n =J_{\lambda_0}^\vp x_n +\lambda_0^{q-1} J_\vp^{-1}(A_{\lambda_0}^\vp x_n)  \mbox{ and } x_0 =J_{\lambda_0}^\vp x_0 +\lambda_0^{q-1} J_\vp^{-1}(A_{\lambda_0}^\vp x_0).\end{equation}
Since $A_{\lambda_0}^\vp x_n \in A(J_{\lambda_0}^\vp x_n)$ and $A_{\lambda_0}^\vp x_0 \in A(J_{\lambda_0}^\vp x_0)$, the monotonicity of $A$ together with  (\ref{100}) and (\ref{101}) yields
\begin{equation}\label{102}
	\lim\limits_{n\to\infty}\lan A_{\lambda_0}^\vp x_n- A_{\lambda_0}^\vp x_0, J_\vp^{-1}(A_{\lambda_0}^\vp x_n)- J_\vp^{-1}(A_{\lambda_0}^\vp x_0)\ran = 0.
\end{equation}
Since $J_\vp^{-1}$ is a duality mapping from $X^*$ to $X$,     it follows,  in view of \cite[ Proposition~2.17]{Cioranescu},  that
$$A_{\lambda_0}^\vp x_n\to A_{\lambda_0}^\vp x_0 \mbox{ as }n\to\infty.$$ This proves the continuity  of $A_{\lambda_0}^\vp$ on $X$.

We now proceed to prove the continuity of $(\lambda, x)\mapsto A_\lambda^\vp x$  on $(0, \infty) \times X$. Let $\{\lambda_n\}\subset (0, \infty)$ and $\{x_n\}\subset X$ be such that $\lambda_n\to \lambda_0\in (0, \infty)$ and $x_n\to x_0\in X$ as $n\to\infty$.
Let $G\subset X$ be a bounded set that contains $x_n$ for all $n$.  Rename $\lambda_1, \lambda_2>0$ such that $\lambda_n\in [\lambda_1, \lambda_2]$ for all $n$. Since
$$ J_{\lambda_n}^\vp x_n \in A^{-1}(A_{\lambda_n}^\vp x_n) \mbox{ and }  x_n =J_{\lambda_n}^\vp x_n +\lambda_n^{q-1} J_\vp^{-1}(A_{\lambda_n}^\vp x_n),$$
it follows that
$$\begin{aligned}
	J_{\lambda_n}^\vp x_n + \lambda_0^{q-1} J_\vp^{-1}(A_{\lambda_n}^\vp x_n)&\in A^{-1}(A_{\lambda_n}^\vp x_n) +\lambda_0^{q-1} J_\vp^{-1}(A_{\lambda_n}^\vp x_n)\\& = \left(A^{-1} +\lambda_0^{q-1} J_\vp^{-1}\right)(A_{\lambda_n}^\vp x_n).
\end{aligned}$$ 
This implies
\begin{eqnarray*}
	A_{\lambda_n}^\vp x_n&=&\left(A^{-1}+\lambda_0^{q-1}J_\vp^{-1}\right)^{-1}\left(J_{\lambda_n}^\vp x_n + \lambda_0^{q-1} J_\vp^{-1}(A_{\lambda_n}^\vp x_n)\right)\\
	&=& A_{\lambda_0}^\vp\left(J_{\lambda_n}^\vp x_n + \lambda_0^{q-1} J_\vp^{-1}(A_{\lambda_n}^\vp x_n)\right)\\
	&=&A_{\lambda_0}^\vp\left(x_n + (\lambda_0^{q-1}- \lambda_n^{q-1}) J_\vp^{-1}(A_{\lambda_n}^\vp x_n)\right).
\end{eqnarray*}
By Lemma~\ref{L3},  $\{A_{\lambda_n}^\vp x_n\}$ is bounded, and so is  $\{J_\vp^{-1}(A_{\lambda_n}^\vp x_n)\}.$ Since $\lambda_n\to \lambda_0$, we have $ (\lambda_0^{q-1}- \lambda_n^{q-1}) J_\vp^{-1}(A_{\lambda_n}^\vp x_n) \to 0$ as $n\to\infty.$ The continuity of $A_{\lambda_0}^\vp$ implies 
$A_{\lambda_n}^\vp x_n \to 	 A_{\lambda_0}^\vp x_0$ as  $n\to \infty.$
This completes the proof.
\end{proof}

\begin{remark}\rm 
		We anticipate that Lemma~\ref{L4} holds for any gauge function $\vp.$  Since the  formula (\ref{closed-form})  may not hold for $A_\la^\vp $ with a general $\vp$,  the above proof does not go through and this subject may be of  independent research interest. 
\end{remark}

Let $G$ be an open and bounded subset of $X$. Let $L:X\supset D(L)\to X^*$
be densely defined linear maximal monotone, $A:X\supset D(A)\to 2^{X^*}$
maximal monotone, and $C(s) :X\supset \overline{G}\to X^*$, $s\in[0,1],$ 
a bounded homotopy of type $(S_+)$ w.r.t. $D(L)$.
Since  ${\rm Gr}(L)$ is closed in $X\times X^*,$ the space
$Y=D(L)$ associated with the graph norm
$\|x\|_Y = \|x\|_X + \|Lx\|_{X^*},$ $ x\in Y,$
becomes a real reflexive Banach space. We may assume that $Y$ and
its dual $Y^*$ are locally uniformly convex.

Let $j: Y\to X$ be the natural embedding and $j^* : X^*\to Y^*$ its
adjoint. Since $j:Y\to X$ is continuous, we have $D(j^*)=X^*.$
This implies that $j^*$ is also continuous. Since $j^{-1}$ is not necessarily
bounded, we have, in general, $j^*(X^*) \neq Y^*$.
Moreover, $j^{-1}(\overline G)=\overline G\cap D(L)$ is closed and $j^{-1}(G)=G\cap D(L)$
is open, 
$\overline{j^{-1}(G)} \subset j^{-1}(\overline G),$ and $ \partial (j^{-1}(G))\subset j^{-1}(\partial G).
$

We define $M:Y\to Y^*$ by
$
( Mx, y) := \langle Ly, J^{-1}(Lx)\rangle,$ $x, y\in Y,$ where
the duality pairing $(\cdot, \cdot )$ is in $Y^*\times Y,$ and
$J^{-1}$ is the inverse of the duality map $J:X\to X^*$ and is
identified with the duality map from $X^*$ to $X^{**}=X$. Also,
for every $x\in Y$ such that $Mx\in j^*(X^*)$, we have
$J^{-1}(Lx)\in D(L^*),$  $Mx = j^*\circ L^*\circ J^{-1}( Lx),\label{M2}$ and
\begin{gather*}
	( Mx-My,x-y) = \langle Lx-Ly,J^{-1}(Lx)-J^{-1}(Ly)\rangle \ge 0 \label{M2b}
\end{gather*}
for all $y\in Y$ such that $My\in j^*(X^*)$. Moreover, it is easy to see that $M$ is continuous on $Y$, and therefore $M$ is  maximal monotone.

We now define $\hat L: Y\to Y^*$ and $\hat C(s): j^{-1}(\overline G)\to Y^*$ by
$
\hat L = j^*\circ L \circ j$ and $\hat C(s) = j^*\circ C(s)\circ j,
$
respectively and, for each $t >0$, we also define $\hat A_t^\vp:Y\to Y^*$ by
$
\hat A_{t}^\vp = j^*\circ A_{t}^\vp\circ j,
$
where $A_t^\vp $ is the Yosida approximant of $A$ corresponding to the gauge function $\vp.$

The next lemma   employs   Lemma~\ref{L2}  and follows as in   \cite[Lemma~5]{AK2008}, and therefore its proof is omitted. 
\begin{lemma}\label{L30}
	Let $G\subset X$ be open and bounded. Assume the following:
	\begin{enumerate}
		\item $L:X\supset D(L)\to X^*$ is linear, maximal monotone with
		$\overline{D(L)}=X;$
		\item $A:X\supset D(A)\to 2^{X^*}$ is strongly quasibounded, maximal
		monotone with  $0\in A(0)$; and
		\item $C(t):X\supset\overline G\to X^*$ is a bounded homotopy of type
		$(S_+)$ w.r.t. $D(L)$.
	\end{enumerate}
	Then, for a continuous curve $f(s), 0\le s \le 1$, in $X^*$, the set
	$$
	F=\big\{x\in j^{-1}(\overline G): \hat L + \hat A_t^\vp +\hat C(s)+tMx = j^*f(s)
	\text{ for some }t>0,\; s\in[0,1]\big\}
	$$
	is bounded in $Y$. 
\end{lemma}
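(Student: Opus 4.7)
Since every $x\in F$ satisfies $x\in j^{-1}(\overline G)\subset\overline G$, boundedness of $G$ immediately gives $\|x\|_X\le R$ for some $R>0$. As $\|x\|_Y=\|x\|_X+\|Lx\|_{X^*}$, I only need to bound $\|Lx\|_{X^*}$ uniformly as $x$ ranges over $F$ (with its associated parameters $t>0$, $s\in[0,1]$). The plan is to first extract an a priori bound on $\lan A_t^\vp x, x\ran$ by testing against $x$, then upgrade it to $\|A_t^\vp x\|$-boundedness via strong quasiboundedness and Lemma~\ref{L2}, and finally lift the $Y^*$-equation back to $X^*$ so that a clean pairing with $J^{-1}(Lx)$ controls $\|Lx\|_{X^*}$.

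First I test the defining identity $\hat L x+\hat A_t^\vp x+\hat C(s)x+tMx=j^*f(s)$ against $x\in Y$ in the $Y^*\times Y$ duality. Unfolding through $j$ and using $(Mx,x)=\lan Lx, J^{-1}(Lx)\ran=\|Lx\|_{X^*}^2$, this becomes
\begin{equation*}
\lan Lx, x\ran+\lan A_t^\vp x, x\ran+\lan C(s)x, x\ran+t\|Lx\|_{X^*}^2=\lan f(s), x\ran.
\end{equation*}
Monotonicity of $L$ (with $L0=0$ from linearity) forces $\lan Lx, x\ran\ge 0$, while monotonicity of $A_t^\vp$ together with $A_t^\vp 0=0$ (which holds because $0\in A(0)$) gives $\lan A_t^\vp x, x\ran\ge 0$. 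The bounded-homotopy hypothesis on $C(s)$, continuity of the curve $f(\cdot)$ on $[0,1]$, and the uniform bound $\|x\|_X\le R$ then supply a constant $K_1>0$ with $\lan A_t^\vp x, x\ran+t\|Lx\|_{X^*}^2\le K_1$; in particular $\lan A_t^\vp x, x\ran\le K_1$. Combining this with $\|x\|_X\le R$, the strong quasiboundedness of $A$, and Lemma~\ref{L2} (applied sequentially: a failure of uniform boundedness of $\|A_t^\vp x\|$ on $F$ would contradict the lemma along a suitable sequence extracted from $F$) yields a constant $K_2>0$ with $\|A_t^\vp x\|_{X^*}\le K_2$ throughout $F$.

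Next I lift the identity from $Y^*$ back to $X^*$. Every summand of the $Y^*$-equation other than $tMx$ lies in $j^*(X^*)$, hence $Mx\in j^*(X^*)$; by the identification recorded in Section~\ref{S2}, this forces $J^{-1}(Lx)\in D(L^*)$ and $Mx=j^*\circ L^*\circ J^{-1}(Lx)$. Density of $D(L)$ in $X$ makes $j$ have dense range, so $j^*$ is injective, and may be cancelled to yield
\begin{equation*}
Lx+A_t^\vp x+C(s)x+tL^*J^{-1}(Lx)=f(s)\quad\text{in } X^*.
\end{equation*}
Pairing this identity with $J^{-1}(Lx)\in X$ contributes $\|Lx\|_{X^*}^2$ from the first term and the nonnegative $t\lan L^*J^{-1}(Lx), J^{-1}(Lx)\ran$ from the last (using that the adjoint $L^*$ of a densely defined linear maximal monotone operator is itself linear monotone with $L^*0=0$). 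Dropping that nonnegative term and using $\|J^{-1}(Lx)\|=\|Lx\|_{X^*}$ leaves $\|Lx\|_{X^*}^2\le(\|f(s)\|+\|A_t^\vp x\|+\|C(s)x\|)\|Lx\|_{X^*}\le K_3\|Lx\|_{X^*}$, so $\|Lx\|_{X^*}\le K_3$ and $F$ is bounded in $Y$.

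The main obstacle I anticipate is the lifting in the third step: the entire argument hinges on recognising that the form of the equation forces $Mx\in j^*(X^*)$, so that the explicit representation $Mx=j^*L^*J^{-1}(Lx)$ becomes available, and on using density of $D(L)$ to cancel $j^*$ and recover an identity in $X^*$. With this structural move in hand and $\|A_t^\vp x\|$ uniformly controlled via Lemma~\ref{L2}, the final estimate on $\|Lx\|_{X^*}$ reduces to a single pairing against $J^{-1}(Lx)$ exploiting the sign of the $L^*$-term.
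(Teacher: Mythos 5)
Your proof is correct and follows essentially the argument the paper defers to (the proof is omitted in the text, with a citation to \cite[Lemma~5]{AK2008}): pair the $Y^*$-identity with $x$ and use the monotonicity of $L$, of $A_t^\vp$ (with $A_t^\vp 0=0$), and the sign of $(Mx,x)=\|Lx\|_{X^*}^2$ to bound $\langle A_t^\vp x,x\rangle$, invoke Lemma~\ref{L2} for a uniform bound on $\|A_t^\vp x\|$, then observe $Mx\in j^*(X^*)$, cancel the injective $j^*$, and pair the resulting $X^*$-identity with $J^{-1}(Lx)$, discarding the nonnegative $L^*$-term. No gaps.
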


The next two propositions are essential for  the existence results in Section~\ref{S2} and Section~\ref{S3}.

\begin{proposition}\label{Prop1}
	Let $A:X\supset D(A)\to 2^{X^*}$ be maximal monotone and $C:X\supset D(C)\to X^*$ be  bounded, demicontinuous and  of type $(S_+)$. Suppose that $G\subset X$ is open and bounded such that $0\in A(0)$,  $p\in X^*,$ and   $$p\not\in (A+C)x$$ for all $x\in \partial G\cap D(A)\cap D(C).$  Then  the following statements hold.
	\begin{enumerate}
		\item There exists $t_0>0$ such that 
		$$A_t^\vp x+Cx\ne p$$ for all $x\in \partial G\cap  D(C)$ and  $t<t_0$.
		\item For  fixed $t_1, t_2>0$,  define $q(t) := t t_1+(1-t)t_2, \;t\in [0, 1]$. Then the operator $$H(t, x) = A_{q(t)}^\vp x+Cx, \quad (t, x) \in [0, 1]\times \overline{G}$$ is a homotopy of type $(S_+)$. 
		\item For every sequence $\{t_n\}\subset(0, \infty)$ such that $t_n\to 0,$   $\lim\limits_{n\to\infty}{\rm d}_{S_+} (A_{t_n}^\vp+C, G, p)$ exists and does not depend on the choice of $\{t_n\}$. 
	\end{enumerate}
\end{proposition}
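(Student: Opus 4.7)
For (i), argue by contradiction. If the conclusion fails, extract sequences $t_n\downarrow 0$ and $x_n\in\partial G\cap D(C)$ with $A_{t_n}^\vp x_n+Cx_n=p$. Since $G$ is bounded and $C$ is bounded on bounded sets, $\{x_n\}$ and $\{Cx_n\}$ are bounded, so $\{A_{t_n}^\vp x_n\}$ is also bounded; after passing to subsequences one may assume $x_n\rh x_0$, $A_{t_n}^\vp x_n\rh z$, and $\lan A_{t_n}^\vp x_n,x_n-x_0\ran\to\ell$. Applying Lemma~\ref{L1}(i) with the trivial $S\equiv 0$ (for which $0\in S(0)$ and $A+S=A$ is maximal monotone) shows $\ell\ge 0$. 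On the other hand $\lan A_{t_n}^\vp x_n+Cx_n,x_n-x_0\ran=\lan p,x_n-x_0\ran\to 0$, whence $\limsup\lan Cx_n,x_n-x_0\ran\le -\ell\le 0$. The type $(S_+)$ property of $C$ then yields $x_n\to x_0$ strongly, so $\ell=0$. Lemma~\ref{L1}(ii) now gives $x_0\in D(A)$ with $z\in Ax_0$, while demicontinuity of $C$ gives $Cx_n\rh Cx_0$, so $p=z+Cx_0\in (A+C)x_0$ with $x_0\in\partial G\cap D(A)\cap D(C)$, contradicting the hypothesis.

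For (ii), I verify Definition~\ref{def2} directly with $L=0$. Suppose $x_n\rh x_0$ in $\overline G$, $s_n\to s_0\in [0,1]$, and $\limsup\lan H(s_n,x_n),x_n-x_0\ran\le 0$. Because $q(s_n)$ lies in the fixed interval $[\min(t_1,t_2),\max(t_1,t_2)]\subset (0,\infty)$, Lemma~\ref{L3} bounds $\{A_{q(s_n)}^\vp x_n\}$, and Lemma~\ref{L4} gives $A_{q(s_n)}^\vp x_0 \to A_{q(s_0)}^\vp x_0$ in $X^*$, so $\lan A_{q(s_n)}^\vp x_0,x_n-x_0\ran\to 0$. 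Monotonicity of each $A_{q(s_n)}^\vp$ then yields
$\liminf\lan A_{q(s_n)}^\vp x_n,x_n-x_0\ran\ge \lim \lan A_{q(s_n)}^\vp x_0,x_n-x_0\ran = 0,$
so $\limsup\lan Cx_n,x_n-x_0\ran\le 0$. Type $(S_+)$ of $C$ forces $x_n\to x_0$ strongly; hence $x_0\in\overline G$. A second application of Lemma~\ref{L4} and the demicontinuity of $C$ give $A_{q(s_n)}^\vp x_n\to A_{q(s_0)}^\vp x_0$ and $Cx_n\rh Cx_0$, so $H(s_n,x_n)\rh H(s_0,x_0)$, as required.

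For (iii), fix two sequences $\{t_n\},\{t_n'\}\subset(0,\infty)$ tending to $0$. Part (i) supplies $t_0>0$ such that, whenever $t<t_0$, the operator $A_t^\vp+C$ is bounded, demicontinuous and of type $(S_+)$ (as the sum of a monotone demicontinuous bounded operator and an $(S_+)$-operator) with $p\ne A_t^\vp x+Cx$ on $\partial G\cap D(C)$; hence the Browder--Skrypnik degree ${\rm d}_{S_+}(A_t^\vp+C,G,p)$ is defined. For indices with $t_n,t_m'<t_0$, the linear interpolation $q(s)=s t_n+(1-s)t_m'$ stays in $(0,t_0)$, so (i) gives $p\notin H(s,\partial G\cap D(C))$ for every $s\in[0,1]$, and (ii) makes $H$ an admissible homotopy. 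Homotopy invariance then yields ${\rm d}_{S_+}(A_{t_n}^\vp+C,G,p)={\rm d}_{S_+}(A_{t_m'}^\vp+C,G,p)$; taking $\{t_n'\}=\{t_n\}$ proves the sequence is eventually constant, and allowing distinct sequences proves the common value is independent of the choice. The main obstacle throughout is the liminf step in (i), since $A_{t_n}^\vp x_n$ lies in $A(J_{t_n}^\vp x_n)$ rather than in $Ax_0$ and so one cannot directly control $\lan A_{t_n}^\vp x_n,x_n-x_0\ran$; Lemma~\ref{L1} is precisely engineered to convert this near-monotonicity into the needed lower bound, after which (ii) becomes a parallel but simpler argument resting on the uniform bound from Lemma~\ref{L3} and the joint continuity from Lemma~\ref{L4}.
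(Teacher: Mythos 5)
Your proposal is correct and follows essentially the same route as the paper's proof: part (i) via contradiction using Lemma~\ref{L1}(i)--(ii) with $S=0$ together with the $(S_+)$-property and demicontinuity of $C$; part (ii) via the monotonicity of $A_{q(t)}^\vp$ plus the joint continuity from Lemma~\ref{L4} (and the uniform bound from Lemma~\ref{L3}); part (iii) via homotopy invariance of the $(S_+)$-degree. The only differences are cosmetic (you phrase the Lemma~\ref{L1}(i) step as a liminf bound $\ell\ge 0$ rather than ruling out a strictly negative limit, and you spell out the two-sequence argument in (iii) more explicitly), so no further comment is needed.
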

\begin{proof}
	(i) Without loss of generality, we assume that $p = 0$. In fact, if $p\ne 0$, then we replace $C$ with $C-p.$  Suppose  that (iii) is false. Then there exist $\{t_n\}\subset (0, \infty)$   and $\{x_n\}\subset \partial G$ such that $t_n\to 0$ and 
	\begin{equation}\label{AC-equation}
		A_{t_n}^\vp x_n +Cx_n = 0
	\end{equation} for all $n$. Since $C$ is bounded, $\{Cx_n\}$ is bounded. This implies that $\{A_{t_n}^\vp x_n\}$ is also bounded.  We may assume that there exist $x_0\in X$ and $w_0\in X^*$ such that $x_n\rightharpoonup x_0$ in $X$ and $A_{t_n}^\vp x_n\rightharpoonup w_0$ in $X^*.$    If  \[\limsup_{n\to\infty} \lan Cx_n, x_n-x_0\ran >  0,\] we find a subsequence of $\{x_n\},$  denoted again by itself, such that 
	\[\lim_{n\to\infty} \lan Cx_n, x_n-x_0\ran >  0.\]   In view of (\ref{AC-equation}), we obtain
	
	$$\lim_{n\to\infty} \lan A_{t_n}^\vp x_n, x_n-x_0\ran <  0;$$  however, this  is impossible by  (i)  of Lemma~\ref{L1}.   We then must have
	\[\limsup_{n\to\infty} \lan Cx_n, x_n-x_0\ran \le  0.\]  By the $(S_+)-$property of $C$, we have $x_n\to x_0,$ and consequently
	$$\lim_{n\to\infty} \lan A_{t_n}^\vp x_n, x_n-x_0\ran =  0.$$  By (ii) of Lemma~\ref{L1},  we get $x_0\in D(A)$ and  $w_0\in Ax_0.$
 Since $C$ is demicontinuous,  $Cx_n \rh Cx_0$ in $X^*.$  This implies 
	$w_0= - Cx_0, $ i.e., $0\in (A+C)(\partial G)$, contradicting  $0\notin (A+C)(\partial G).$\\
	\vspace{-2ex}
	
	(ii) Let $t_1, t_2\in (0, t_0]$ be such that $t_1< t_2$.  Consider the following one-parameter family of operators:
	$$H(t, x) := A_{q(t)}^\vp  x+ Cx, \quad (t, x) \in [0, 1]\times \overline{G}.$$
	We prove that $H(t, \cdot)$ is a bounded  homotopy of type $(S_+)$.   The boundedness of $H(\cdot, \cdot)$ follows from Lemma~\ref{L3} and the boundedness of $C$. Let $\{t_n\}\subset [0, 1]$ and $\{x_n\}\subset \overline{ G}$ satisfy $t_n \to t_0$ and $x_n\rh x_0$ in $X,$ and 
	\begin{equation}\label{201}
		\limsup_{n\to\infty}\lan A_{q(t_n)}^\vp  x_n+ Cx_n, x_n -x_0\ran \le 0.
	\end{equation} Using the monotonicity  of $A_{q(t)}^\vp$ in (\ref{201}), we get
	\begin{equation}\label{202}
		\limsup_{n\to\infty}\lan A_{q(t_n)}^\vp  x_0+ Cx_n, x_n -x_0\ran \le 0.
	\end{equation} By Lemma~\ref{L4}, we have $A_{q(t_n)}^\vp  x_0 \to A_{q(t_0)}^\vp  x_0,$  and therefore (\ref{202}) yields
	$$\limsup_{n\to\infty}\lan  Cx_n, x_n -x_0\ran \le 0.$$ Since $C$ is  demicontinuous and of type $S_+$, it follows that $x_n \to x_0$ in $X$ and $Cx_n \rightharpoonup Cx_0$ in $X^*.$ Consequently, we have $$A_{q(t_n)}^\vp  x_n  +Cx_n \rh  A_{q(t_0)}^\vp  x_0+Cx_0$$ as $n\to \infty$. This proves that $H(t, \cdot)$, $t\in [0, 1]$, is a homotopy of type $(S_+)$.  \\
	\vspace{-2ex}
	
	(iii) By the invariance of the degree, ${\rm d}_{S_+},$  for $(S_+)$-mappings under the homotopies of type $(S_+)$, we have
	$${\rm d}_{S_+} (A_{t_1}^\vp, G, 0)= {\rm d}_{S_+} (H(0, \cdot), G, 0)= {\rm d}_{S_+}( H(1, \cdot), G, 0)={\rm d}_{S_+} (A_{t_2}^\vp, G, 0).$$
	It follows that ${\rm d}_{S_+} (A_t^\vp, G, 0)$ exists and is independent of $t\in (0, t_0]$.
\end{proof}

\begin{remark}\rm  Let $A$, $C$, $G,$ and $p$ be the same as in Proposition~\ref{Prop1}.
	When we define a degree mapping of $A+C$, denoted by ${\rm D} (A+C, G, p)$,   by 
	$$  {\rm D} (A+C, G, p) = \lim_{t\to 0^+} {\rm d}_{S_+} (A_t^\vp, G, p),$$ we can verify that the degree mapping $ {\rm D}$ has the same four basic properties as the  Browder degree in \cite{Browder1983}.   By the uniqueness of the Browder degree  established by  Berkovits and Miettunen \cite{BM2008}, the degree ${\rm D}$  coincides with the Browder degree  for $A+C$.
\end{remark}

By  replacing $\hat T_t$ everywhere in  \cite[Lemma 5, Lemma 6, and Lemma 8]{AK2008} with   $\hat A_t^\vp$  with the gauge function $\vp(r) = r^{p-1}$ and by following the methodology used in \cite{AK2008} in conjunction with  Lemma~\ref{L1},  Lemma~\ref{L2}, Lemma~\ref{L4},  and Lemma~\ref{L30}, we obtain Proposition~\ref{Prop2}  below. Its proof is omitted here because the method of proof is similar to that  in \cite{AK2008} and Proposition~\ref{Prop1}, except for having to deal with $\hat A_t^\vp.$  For further properties of $L+A+C$ in relation to the following proposition   for  $\vp(r) = r$,  the reader is referred to Addou and Mermri~\cite{AM} and Adhikari and Kartsatos\cite{AK2008}.
\begin{proposition}\label{Prop2} Let $G\subset X$ be  open and bounded. Assume that
	$L:X\supset D(L)\to X^*$ is linear, maximal monotone with
	$\overline{D(L)}=X$; $A:X\supset D(A)\to 2^{X^*}$ is strongly quasibounded, maximal
	monotone with  $0\in A(0)$; and $C(t):X\supset\overline G\to X^*$, \; $t\in [0, 1]$,  is a bounded homotopy of type
	$(S_+)$ w.r.t. $D(L)$.
	Suppose that  $$0\not\in (L+A+C(t))x$$ for all $x\in \partial G\cap D(L)\cap D(A).$  Then  the following statements hold.
	\begin{enumerate}
		\item There exists $t_0>0$ such that 
		$$\hat Lx + \hat A_t^\vp x+\hat C(t)x+t Mx\ne 0$$ for all $(t,x)\in [0, 1]\times (\partial G\cap  D(L))$ and  $t<t_0$.
		\item For  fixed numbers $t_1, t_2>0$,  define $q(t) := tt_1+(1-t)t_2,\;  t\in [0, 1]$. Then  the operator \[\hat H(t, x) =  \hat L x+ \hat A_{q(t)}^\vp x +\hat C(t)x+s(t) Mx,\]  $(t,x)\in [0, 1]\times (\overline G\cap  D(L)),$ is a homotopy of type $(S_+)$. 
		\item For every sequence $\{t_n\}\subset (0, \infty)$ such that $t_n\to 0,$  $$\lim\limits_{n\to\infty}{\rm d}_{S_+} (\hat L + \hat A_{t_n}^\vp +\hat C(t)+t_n M, G, 0)$$ exists and does not depend on the choice of $\{t_n\}$. 
	\end{enumerate}
\end{proposition}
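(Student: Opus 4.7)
The plan is to adapt the contradiction-plus-homotopy argument of Proposition~\ref{Prop1}, using Lemma~\ref{L30} to bound solutions in the graph norm of $Y=D(L)$, Lemma~\ref{L2} to invoke strong quasiboundedness of $A$, and the monotonicity of $M$ to produce the extra strong convergence of $\{Lx_n\}$ in $X^*$ needed for convergence in $Y$.

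For part~(i), I would argue by contradiction. Assume sequences $\{t_n\}\subset(0,\infty)$ with $t_n\to 0$, $\{s_n\}\subset[0,1]$ with $s_n\to s_0$, and $\{x_n\}\subset\partial G\cap D(L)$ solving $\hat Lx_n+\hat A_{t_n}^\vp x_n+\hat C(s_n)x_n+t_nMx_n=0$ in $Y^*$. Lemma~\ref{L30} with $f\equiv 0$ bounds $\{x_n\}$ in $Y$; up to subsequences $x_n\rh x_0$ in $X$ and $Lx_n\rh Lx_0$ in $X^*$ by weak sequential closedness of the graph of $L$. Pairing the equation with $x_n$ and using $\langle Lx_n,x_n\rangle\ge 0$ and $t_n\|Lx_n\|^2\ge 0$ gives $\langle A_{t_n}^\vp x_n,x_n\rangle$ bounded above, so Lemma~\ref{L2} yields boundedness of $\{A_{t_n}^\vp x_n\}$ in $X^*$; extract $A_{t_n}^\vp x_n\rh w_0$. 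Pairing instead with $x_n-x_0$ and discarding the nonnegative $\langle Lx_n-Lx_0,x_n-x_0\rangle$ together with the $M$-piece (bounded and multiplied by $t_n\to 0$), I obtain
\[
\limsup_n\langle A_{t_n}^\vp x_n+C(s_n)x_n,x_n-x_0\rangle\le 0.
\]
The dichotomy of Proposition~\ref{Prop1}(i) now applies verbatim: Lemma~\ref{L1}(i) rules out the $C$-limsup being strictly positive, so the $(S_+)$-w.r.t.-$D(L)$ property of $\{C(s)\}$ forces $x_n\to x_0$ in $X$, placing $x_0\in\partial G$. Lemma~\ref{L1}(ii) then gives $x_0\in D(A)$ and $w_0\in Ax_0$. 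Passing to the limit in the $Y^*$-equation tested against any $y\in D(L)$ and invoking density of $D(L)$ in $X$ yields $Lx_0+w_0+C(s_0)x_0=0$, contradicting the hypothesis.

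For part~(ii), verifying that $\hat H(t,x)=\hat Lx+\hat A_{q(t)}^\vp x+\hat C(t)x+s(t)Mx$ is a homotopy of type $(S_+)$ follows the pattern of Proposition~\ref{Prop1}(ii). Given $t_n\to t_0$ and $\{x_n\}$ with $x_n\rh x_0$ in $X$, $Lx_n\rh Lx_0$ in $X^*$, and $\limsup_n(\hat H(t_n,x_n),x_n-x_0)_{Y^*\times Y}\le 0$, decompose the pairing into the nonnegative monotonicity terms
\[
\langle Lx_n-Lx_0,x_n-x_0\rangle,\qquad\langle A_{q(t_n)}^\vp x_n-A_{q(t_n)}^\vp x_0,x_n-x_0\rangle,
\]
\[
s(t_n)\langle Lx_n-Lx_0,J^{-1}(Lx_n)-J^{-1}(Lx_0)\rangle,
\]
plus the $C$-contribution and residuals that vanish (the latter by Lemma~\ref{L4} and weak convergence). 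This leaves $\limsup_n\langle C(t_n)x_n,x_n-x_0\rangle\le 0$, so the $(S_+)$-w.r.t.-$D(L)$ property of $\{C(s)\}$ delivers $x_n\to x_0$ in $X$ and $C(t_n)x_n\rh C(t_0)x_0$. Simultaneously, the vanishing of the $M$-monotonicity term combined with the $(S_+)$-property of the duality map $J^{-1}:X^*\to X$ forces $Lx_n\to Lx_0$ in $X^*$, so $x_n\to x_0$ in $Y$. Lemma~\ref{L4} then secures $\hat A_{q(t_n)}^\vp x_n\to\hat A_{q(t_0)}^\vp x_0$ and continuity of $M$ closes $\hat H(t_n,x_n)\rh\hat H(t_0,x_0)$ in $Y^*$.

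Part~(iii) is then immediate from homotopy invariance of ${\rm d}_{S_+}$ applied to the homotopy of part~(ii): for any $\lambda_1,\lambda_2\in(0,t_0)$ the corresponding degrees coincide, so the limit as $t_n\to 0$ exists and is independent of the sequence. The main obstacle I anticipate is the bookkeeping between the $Y^*$- and $X^*$-pairings when extracting the monotonicity inequalities: $Mx$ generally lies in $Y^*\setminus j^*(X^*)$, so the passage to $X$-level estimates must go through the defining formula $(Mx,y)=\langle Ly,J^{-1}Lx\rangle$ rather than any pullback to $X^*$.
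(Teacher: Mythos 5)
Your proposal is correct and follows essentially the route the paper itself prescribes: the proof is omitted there precisely because it combines the methodology of \cite{AK2008} with the argument of Proposition~\ref{Prop1}, using Lemma~\ref{L30} for the $Y$-bound, Lemma~\ref{L2} for the boundedness of $\{A_{t_n}^\vp x_n\}$, Lemma~\ref{L1} for the dichotomy, and Lemma~\ref{L4} plus the monotonicity of $M$ for the $(S_+)$-homotopy property, which is exactly what you do. Your closing remark about routing all pairings through $(Mx,y)=\langle Ly, J^{-1}(Lx)\rangle$ rather than a pullback to $X^*$ is the right way to handle the fact that $Mx$ need not lie in $j^*(X^*)$.
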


\section{Existence of Nontrivial Solutions}\label{S3}

 Hu and Papageorgiou  generalized in    \cite{HP} the 
Browder  degree theory \cite{Browder1983} to the mappings of the form $A+C+T$, where
$A:X\supset D(A)\to 2^{X^*}$ is maximal monotone with $0\in A(0)$, $C:X\supset D(C)\to X^*$ is bounded demicontinuous of type
$(S_+),$ and $T$ is of   class $(P)$. With an application
of the $(S_+)$-degree developed by Browder \cite{Browder1983} and Skrypnik \cite{Skrypnik1994},   we prove in Theorem~\ref{Th1}  the existence of nonzero solutions of  $Ax+Cx+Tx\ni 0$  when  $A+C+T$ satisfies certain  boundary conditions, and  the operator $A,$ in addition, is positively homogeneous of degree $\gamma>0$.
This result extends  the  existence result for $\gamma \in (0, 1]$ in \cite{Adhikari2017}    to $\gamma >0$ (see also \cite[Theorem 6]{AK} for $\gamma =1$).   

The following lemma, which  is crucial to the existence results in this section, shows that positively homogeneous maximal monotone operators transmit the  homogeneity into  their Yosida approximants corresponding to  $J_\vp$ with $\vp(r) = r^{p-1}$, $p>1$,   and a suitable   value of $ p.$

\begin{lemma}\label{L5}
	Let $A:X\supset D(A)\to 2^{X^*}$ be maximal monotone and positively homogeneous
	of degree  $\gamma>0$. Then, for each $t>0$, the Yosida approximant $A_t^\vp$ corresponding to the gauge function $\vp(r) = r^{p-1}$, $p>1$,  satisfies
	\begin{equation}\label{138}
		A_t^\vp (sx) =\begin{cases} 
			s^\gamma A_{ts^{\gamma+1-p}}^\vp x & \mbox{for  }
			(s, x)\in (\mathbf{R}_+\setminus\{0\})\times X\\
			0 & \mbox{for  }
			(s, x)\in \{0\}\times X.
		\end{cases} 
	\end{equation}
	Consequently, if $p= \gamma+1, $   then $A_t^\vp$ is positively homogeneous of degree $\gamma$, i.e.,  $$A_t^\vp (sx) = s^\gamma A_t^\vp x \quad \mbox{for all }
	(s, x)\in \mathbf{R}_+\times X.$$
\end{lemma}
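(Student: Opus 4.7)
The plan is to compute $J_t^\vp(sx)$ explicitly via an educated guess that exploits the scaling of $J_\vp$ and $A$, then read off $A_t^\vp(sx)$ from the definition in (\ref{Yosida}). The case $s=0$ is immediate: since $A$ is positively homogeneous of degree $\gamma>0$, the Remark after Definition~\ref{Hom} gives $0\in A(0)$, so $0\in J_\vp(0-0)+t\,A(0)$. By the uniqueness of the solution of (\ref{Yosida-inclusion}) established in Proposition~\ref{prop0}(i), we get $J_t^\vp 0 = 0$ and hence $A_t^\vp 0 = \frac{1}{t}J_\vp(0) = 0$.

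For the case $s>0$, fix $x\in X$ and set $t' := t\,s^{\gamma+1-p}$. I would guess that
\begin{equation*}
J_t^\vp(sx) = s\, J_{t'}^\vp x,
\end{equation*}
and verify this by checking that $y:=s\,J_{t'}^\vp x$ satisfies the defining inclusion $0\in J_\vp(y-sx)+t\,A y$ for the resolvent $J_t^\vp$ at $sx$. Writing $u:=J_{t'}^\vp x$, we have by definition $0\in J_\vp(u-x)+t'\,Au$. Now use two scaling facts: $J_\vp$ is positively homogeneous of degree $p-1$ because $\vp(r)=r^{p-1}$, so $J_\vp(s(u-x)) = s^{p-1}J_\vp(u-x)$; and $A$ is positively homogeneous of degree $\gamma$ with $s>0$, so by the Remark $A(su)=s^\gamma Au$. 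Multiplying the inclusion for $u$ by $s^{p-1}$ yields
\begin{equation*}
0 \in s^{p-1}J_\vp(u-x) + s^{p-1}t'\,Au = J_\vp(su-sx) + t\,s^{\gamma+1-p}\cdot s^{p-1-\gamma}\cdot s^\gamma\,Au,
\end{equation*}
and simplifying the scalar gives $0\in J_\vp(y-sx) + t\,A y$, as required. By uniqueness, $J_t^\vp(sx)=s\,J_{t'}^\vp x$.

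Finally, substituting into the definition of $A_t^\vp$ and again using $J_\vp(s\,\cdot)=s^{p-1}J_\vp(\cdot)$ gives
\begin{equation*}
A_t^\vp(sx) = \tfrac{1}{t}J_\vp\bigl(sx - s\,J_{t'}^\vp x\bigr) = \tfrac{s^{p-1}}{t}J_\vp\bigl(x - J_{t'}^\vp x\bigr) = \tfrac{s^{p-1}}{t}\cdot t'\,A_{t'}^\vp x = s^\gamma A_{ts^{\gamma+1-p}}^\vp x,
\end{equation*}
which is (\ref{138}). The ``consequently'' clause is immediate: when $p=\gamma+1$ the exponent $\gamma+1-p$ vanishes, so $A_t^\vp(sx)=s^\gamma A_t^\vp x$ for all $s\ge 0$. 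I expect the only subtle point to be pinning down the right exponent $\gamma+1-p$ in the rescaled parameter, which is forced by balancing the degrees $p-1$ of $J_\vp$ against $\gamma$ of $A$ in the resolvent equation.
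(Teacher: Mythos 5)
Your proof is correct, and it takes a recognizably different route from the paper's. The paper starts from the closed-form expression $A_t^\vp = (A^{-1}+t^{q-1}J_\vp^{-1})^{-1}$ in (\ref{closed-form}), sets $y=A_t^\vp(sx)$, and unwinds the inclusion $y\in A(-t^{q-1}J_\vp^{-1}y+sx)$ using the homogeneity of $A$ together with the degree-$(q-1)$ scaling of the \emph{dual} duality map $J_\vp^{-1}$ from (\ref{11}), until it reads off $s^{-\gamma}y=A_{ts^{\gamma+1-p}}^\vp x$. You instead work on the primal side: you guess the resolvent identity $J_t^\vp(sx)=s\,J_{ts^{\gamma+1-p}}^\vp x$, verify it by plugging into the defining inclusion (\ref{Yosida-inclusion}) and invoking the uniqueness from Proposition~\ref{prop0}(i), and then read off $A_t^\vp(sx)$ from (\ref{Yosida}). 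Your scalar bookkeeping checks out ($s^{p-1}t'=ts^\gamma$ and $s^{p-1}t'/t=s^\gamma$), and the inclusion $s^\gamma Au\subset A(su)$ from the Remark after Definition~\ref{Hom} is all you need before uniqueness takes over. Two small dividends of your route: it proves the resolvent homogeneity $J_t^\vp(sx)=sJ_{t'}^\vp x$ directly (the paper only records this as a consequence, via (\ref{splitting}), in the remark following the lemma), and it avoids any appeal to (\ref{closed-form}), which the paper itself cautions may fail for general gauge functions --- though in the end both arguments are tied to $\vp(r)=r^{p-1}$ through the homogeneity of $J_\vp$, so neither genuinely extends beyond that case.
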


\begin{proof}
Let $t>0$ be fixed. The case $s=0$ is trivial. 	Assume $s>0$, and let
$$
y= A_t^\vp (sx) =(A^{-1} + t ^{q-1}J_\vp ^{-1}) ^{-1}(sx), \quad x\in X,
$$
where $q$ satisfies $1/p+1/q =1.$
Then
\begin{equation*}
	y \in A(-t ^{q-1}J_\vp^{-1} y + sx)\\
	= A\left(s\left(-t ^{q-1}s^{-1} J^{-1}_\vp  y +  x\right)\right).
\end{equation*}
This means $$\left(s\left(-t ^{q-1}s^{-1} J^{-1}_\vp  y +  x\right), \; y\right)\in {\rm Gr} (A).$$ Since $A$ is positively homogeneous of degree $\gamma>0$, we get
$$\left(-t ^{q-1}s^{-1} J^{-1}_\vp  y +  x, \; s^{-\gamma}y\right)\in {\rm Gr} (A),$$ 
i.e.,  $$s^{-\gamma}y\in A\left(-t ^{q-1}s^{-1} J^{-1}_\vp  y +  x\right).$$
In view of  (\ref{11}), we have  $$s^{-\gamma(1-q)}J^{-1}_\vp (s^{-\gamma} y)=J^{-1}_\vp y,$$  and therefore
$$s^{-\gamma} y\in A\left(-t ^{q-1}s^{\gamma (q-1)-1} J^{-1}_\vp (s^{-\gamma} y) +
x\right).$$ This implies
$$x\in \left(A^{-1} +t ^{q-1}s^{\gamma (q-1)-1} J^{-1}_\vp \right)(s^{-\gamma} y).$$
Using $$t ^{q-1}s^{\gamma (q-1)-1}= \left(ts^\gamma\right)^{q-1}\left(s^{1-p}\right)^{q-1} = \left(ts^{\gamma+1-p}\right)^{q-1},$$
we get
$$
y = s^\gamma\left(A^{-1} +\left(ts^{\gamma+1-p}\right)^{q-1}  J^{-1}_\vp \right)^{-1}x
= s^\gamma A_{ts^{\gamma+1-p}}^\vp x.
$$
Thus, we have
$$A_t^\vp (sx) = s^\gamma A_{ts^{\gamma+1-p}}^\vp x.$$  Clearly, $A_t^\vp$ is positively homogeneous of degree $\gamma$  if $p=\gamma +1 .$
\end{proof} 

\begin{remark}\rm 
	In the settings of Lemma~\ref{L5} with $p=\gamma +1$, it follows from (\ref{splitting}) that the  resolvent $J_t^\vp$ is positively homogeneous of degree 1 in the following sense:  for each $t>0$, we have $J_t^\vp(sx) = s J_t^\vp x$ for all $x\in X$ and all $s\ge 0.$
\end{remark}

\begin{theorem}\label{Th1}
	Assume that $G_1, G_2\subset X$ are open, bounded with $0\in G_2$
	and $\overline{G_2}\subset G_1$. Let $A:X\supset D(A)\to  2^{X^*}$
	be maximal monotone and positively homogeneous of degree
	$\gamma>0$ with $A(0) = \{0\};$ $C:\overline{ G}_1\to X^*$ bounded,
	demicontinuous and of type $(S_+);$ and $T:\overline{G}_1\to
	2^{X^*}$ of class $(P)$. Assume, further, that
	\begin{enumerate}
		\item[\rm (H1)] there exists $v^*_0\in X^*\setminus\{0\}$ such that
		$Ax+Cx +Tx\not\ni \lambda v^*_0$  for all $(\lambda,x)\in\mathbf{R}_+\times(D(A)\cap\partial G_1),$  and
		\item[\rm (H2)] $Ax+Cx+ Tx+\lambda Jx \not\ni 0$ for all $(\lambda,x)\in
		\mathbf{R}_+\times(D(A)\cap\partial G_2)$.
	\end{enumerate}
	Then the inclusion $Ax+Cx+Tx\ni 0$ has a nonzero solution $x\in
	D(A)\cap(G_1\setminus G_2)$.
\end{theorem}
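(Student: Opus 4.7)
My approach is the topological-degree strategy on the annular region $G_1\setminus\overline{G_2}$: I will compute the Browder--Hu--Papageorgiou degree of $A+C+T$ separately on $G_1$ and on $G_2$ using Yosida approximants of $A$, show the two degrees differ, and invoke additivity to obtain a solution in the annulus. The essential new ingredient is the choice of gauge function $\vp(r)=r^{p-1}$ with $p=\gamma+1$; by Lemma~\ref{L5} this forces the Yosida approximant $A_t^\vp$ to be itself positively homogeneous of degree $\gamma$, matching the homogeneity of $A$. This matching is precisely what allows the argument to bypass the restriction $\gamma\in(0,1]$ imposed in earlier work where $\vp(r)=r$ was used. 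Note also that $A(0)=\{0\}$ gives $A_t^\vp 0=0$ directly from the defining inclusion \eqref{Yosida-inclusion}.

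The first step is to lift (H1) and (H2) to the Yosida level: I would show that there exists $t_0>0$ such that for all $t\in(0,t_0]$,
\begin{gather*}
A_t^\vp x+Cx+Tx\not\ni \lambda v_0^*\quad\text{on}\quad \mathbf{R}_+\times \partial G_1,\\
A_t^\vp x+Cx+Tx+\lambda Jx\not\ni 0\quad\text{on}\quad \mathbf{R}_+\times \partial G_2.
\end{gather*}
Each is proved by contradiction along sequences $t_n\to 0^+$, $x_n\in\partial G_i$, $\lambda_n\ge 0$. Pairing with $x_n$ and using $A_{t_n}^\vp 0=0$, monotonicity, the boundedness of $C,T$ on $\overline{G_1}$, and the fact that $\partial G_2$ is bounded away from $0$ (since $0\in G_2$), I would first conclude that $\{\lambda_n\}$ is bounded. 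Extracting weakly convergent subsequences, using the compactness and upper-semicontinuity of $T$ to produce $y_n\to y_0\in Tx_0$, then invoking the $(S_+)$-property of $C$ together with Lemma~\ref{L1} applied with $S=0$, I would promote weak to strong convergence of $\{x_n\}$ and recover a violation of the original (H1) or (H2). Once lifted, the family $s\mapsto A_t^\vp+C+T-sv_0^*$ is an admissible homotopy on $\partial G_1$; since $A_t^\vp, C, T$ are bounded on $\overline{G_1}$, choosing $s$ so large that $s\|v_0^*\|$ exceeds the uniform bound makes the equation have no solution in $\overline{G_1}$, yielding $d_{S_+}(A_t^\vp+C+T,G_1,0)=0$.

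For the degree on $G_2$, the lifted (H2) makes $\lambda\mapsto A_t^\vp+C+T+\lambda J$ an admissible homotopy on $\partial G_2$. The same pairing-with-$x$ estimate gives $\lambda\|x\|\le K$ for any solution $x\in\overline{G_2}$, so for $\lambda$ sufficiently large all such solutions lie in an arbitrarily small ball $B(0,\varepsilon)\subset G_2$; excision localizes the degree to $B(0,\varepsilon)$. On $\partial B(0,\varepsilon)$ with $\varepsilon$ chosen so that $\lambda\varepsilon$ dominates the uniform bound on $\|A_t^\vp x+Cx+y\|$ over $\overline{G_1}$, the straight-line homotopy $s\mapsto s(A_t^\vp+C+T)+\lambda J$, $s\in[0,1]$, has no zero on $\partial B(0,\varepsilon)$, reducing the degree to $d_{S_+}(\lambda J,B(0,\varepsilon),0)=1$. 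Additivity then gives
\begin{equation*}
d_{S_+}(A_t^\vp+C+T,G_1\setminus\overline{G_2},0)=0-1=-1\ne 0,
\end{equation*}
so each sufficiently small $t>0$ yields $x_t\in G_1\setminus\overline{G_2}$ with $A_t^\vp x_t+Cx_t+y_t=0$ for some $y_t\in Tx_t$. Along $t_n\to 0^+$, the bounded sequences $\{x_{t_n}\}$ and $\{A_{t_n}^\vp x_{t_n}\}$ admit weak limits; the class-$(P)$ property of $T$ produces a strong cluster point $y_0\in Tx_0$; and the $(S_+)$-property of $C$ combined with Lemma~\ref{L1} (with $S=0$) promotes weak to strong convergence $x_{t_n}\to x_0$ and yields $Ax_0+Cx_0+y_0\ni 0$. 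Finally, (H1) and (H2) at $\lambda=0$ exclude $x_0\in\partial G_1\cup\partial G_2$, so $x_0\in G_1\setminus\overline{G_2}$ is the required nonzero solution, nonzero because $0\in G_2$.

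The principal technical difficulty I expect is ensuring that the degree computations, and in particular the localization to $B(0,\varepsilon)$ on $G_2$, are uniform in the Yosida parameter $t$ for arbitrary $\gamma>0$: the naive gauge $\vp(r)=r$ produces a Yosida approximant whose scaling behavior does not match $A$ when $\gamma\ne 1$, and this is precisely the obstruction that Lemma~\ref{L5} with $p=\gamma+1$ removes by restoring matching homogeneity of degree $\gamma$. A secondary subtlety is the passage to the limit $t_n\to 0^+$, where the interplay of Lemma~\ref{L1}, the $(S_+)$-property of $C$, and the upper-semicontinuity of $T$ must be orchestrated carefully to ensure the weak limit actually lies in $D(A)$ and solves the original inclusion rather than a relaxed form of it.
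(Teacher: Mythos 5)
Your overall strategy coincides with the paper's: compute the degree of the Yosida-regularized map on $G_1$ and on $G_2$, show they differ ($0$ versus $1$), excise, and pass to the limit $t\to 0^+$ via Lemma~\ref{L1}; and you correctly identify Lemma~\ref{L5} with $p=\gamma+1$ as the ingredient that removes the restriction $\gamma\in(0,1]$. (The paper replaces $T$ by a continuous Cellina selection $q_\epsilon$ so as to stay within the single-valued $(S_+)$-degree and adds a second limit $\epsilon\to 0$, whereas you keep $T$ multivalued; that is a legitimate variant, not a flaw.)

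There is, however, a genuine gap at the step where you lift (H1) to the Yosida level on $\partial G_1$. You claim that pairing the equation $A_{t_n}^\vp x_n+Cx_n+y_n=\lambda_n v_0^*$ with $x_n$ bounds $\{\lambda_n\}$. It does not: monotonicity and $A_{t_n}^\vp 0=0$ give only $\lambda_n\langle v_0^*,x_n\rangle=\langle A_{t_n}^\vp x_n,x_n\rangle+\langle Cx_n+y_n,x_n\rangle\ge -K$, and since $\langle v_0^*,x_n\rangle$ has no sign on $\partial G_1$, this yields no bound on $\lambda_n$. (The analogous pairing \emph{does} work on $\partial G_2$, because there the term is $\lambda_n\|x_n\|^2$ with $\|x_n\|$ bounded away from $0$.) The unbounded-$\lambda$ regime on $G_1$ is precisely where the paper deploys the homogeneity: if $\tau_n\to\infty$ then $\|A_{t_n}^\vp x_n\|\to\infty$, and Lemma~\ref{L5} lets one write $A_{t_n}^\vp x_n/\|A_{t_n}^\vp x_n\|=A_{t_n}^\vp u_n$ with $u_n=x_n/\|A_{t_n}^\vp x_n\|^{1/\gamma}\to 0$, whence $A_{t_n}^\vp u_n\to v_0^*/\|v_0^*\|$ and $\langle A_{t_n}^\vp u_n,u_n\rangle\to 0$, so Lemma~\ref{L1}(ii) forces $v_0^*/\|v_0^*\|\in A(0)=\{0\}$, a contradiction. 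Your narrative instead assigns the role of homogeneity to the uniform-in-$t$ localization on $G_2$, where it is not needed. Without the rescaling argument you cannot certify that the homotopy $s\mapsto A_t^\vp+C+T-s\tau_0v_0^*$ is zero-free on $\partial G_1$ all the way up to the large value of $s$ at which the norm bound kills all solutions in $\overline{G_1}$, and the computation $d(A_t^\vp+C+T,G_1,0)=0$ is then unsupported. Repairing the proposal requires replacing the pairing step on $\partial G_1$ by this rescaling argument.
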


\begin{proof}  
	To study the solvability of the inclusion
	$$Ax + Cx + Tx \ni 0,  \; x\in \overline G_1,$$ we consider the associated approximate equation
	\begin{equation}\label{5}
		A_t^\vp x + Cx + q_\epsilon x = 0, \; t>0, x\in \overline G_1, \epsilon>0.
	\end{equation}
	Here, the gauge function is taken to be $\vp(r) = r^{p-1},$ $1<p<\infty$ so that $\gamma = p-1$,     
	and $q_\epsilon:\overline{G_1}\to X^*$  is an approximate
	continuous Cellina-selection as in \cite[Lemma 6]{Aubin1984} and \cite{HP}
	satisfying
	$
	q_\epsilon x\in T(B_\epsilon(x)\cap \overline{G_1})+B_\epsilon(0)
	$
	for all $x\in \overline{G_1}$ and
	$q_\epsilon(\overline{G_1})\subset\overline{\operatorname{co}
		T(\overline{G_1})}$.
	
	We show that the equation \eqref{5} has a solution $x_{t, \epsilon}$ in
	$G_1\setminus G_2$ for all sufficiently small $t$ and $\epsilon$.
	To this end, we first show that there exist $\tau_0>0$, $t_0>0$ and
	$\epsilon_0>0$ such that the equation
	\begin{equation}\label{6*}
		A_t^\vp x + Cx+ q_\epsilon x = \tau v_0^*
	\end{equation}
	has no solution in $G_1$ for every $\tau\ge \tau_0$, $t\in(0, t_0]$
	and $\epsilon\in(0, \epsilon_0]$.
	
	Assuming the contrary, let $\{\tau_n\}\subset(0, \infty)$,
	$\{t_n\}\subset(0, \infty)$, $\{\epsilon_n\}\subset(0,\infty)$ and
	$\{x_n\}\subset G_1$ be such that $\tau_n\to\infty$, $t_n\to 0$,
	$\epsilon_n\to 0$ and
	\begin{equation}\label{7}
		A_{t_n}^\vp x_n + Cx_n +q_{\epsilon_n}x_n = \tau_n v_0^*.
	\end{equation}
	We may assume that $q_{\epsilon_n}x_n\to g^*\in X^*$ in view of
	the properties of $T$. Then $\|A_{t_n}^\vp x_n\|\to \infty$ as
	$\|\tau_n v_0^*\|\to\infty$ and $\{Cx_n\}$ is bounded.
	Thus, from \eqref{7}, we obtain
	\begin{equation}\label{8}
		\frac{A_{t_n}^\vp x_n}{\|A_{t_n}^\vp x_n\|} +
		\frac{Cx_n}{\|A_{t_n}^\vp x_n\|} + \frac{q_{\epsilon_n}x_n}{\|A_{t_n}^\vp x_n\|}=
		\frac{\tau_n}{\|A_{t_n}^\vp x_n\|}v_0^*.
	\end{equation}
	This implies	
	\begin{equation}\label{10}
		\frac{\tau_n\|v_0^*\|}{\|A_{t_n}^\vp x_n\|}\to 1
		\quad\text{ so that} \quad
		\frac{\tau_n}{\|A_{t_n}^\vp x_n\|}\to\frac{1}{\|v_0^*\|} \quad\text{as } n\to\infty.
	\end{equation}
	Since $p-1 =\gamma$,  by Lemma~\ref{L5},  $A_t^\vp$ is also homogeneous of degree $\gamma = p-1$, and therefore we obtain
	\begin{equation} \label{144}
		\frac{A_{t_n}^\vp x_n}{\|A_{t_n}^\vp x_n\|}
		= A_{t_n}^\vp \left(\frac{x_n}{\|A_{t_n}^\vp x_n\|^{1/\gamma}}\right).
	\end{equation}
	Let
	$	u_n = {x_n}/{\|A_{t_n}^\vp x_n\|^{1/\gamma}}.	$
	It is clear that 
	$u_n\to 0$.  In view of \eqref{8}, \eqref{10}, and \eqref{144}, we obtain
	$A_{t_n}^\vp u_n\to h$ with
	$	h = {v_0^*}/{\|v_0^*\|}.$
	This implies
	$$	\lim_{n\to\infty}\langle A_{t_n}^\vp u_n, u_n\rangle = \langle h,
	0\rangle = 0.	$$
	Since $t_n\to 0$, by (ii) of
	Lemma~\ref{L1} with $S=0$ we obtain $0\in D(A)$ and $h\in A(0)=\{0\}$,
	a contradiction to  $\|h\| = 1$.
	
	We now consider the homotopy mapping
	\begin{equation}\label{13}
		H_1(s,x, t, \epsilon) = A_t^\vp x+Cx+q_\epsilon x - s\tau_0v_0^*,
		\quad s\in[0,1], \;x\in\overline{G_1},
	\end{equation}
	where $t\in(0, t_0]$ and $\epsilon\in(0, \epsilon_0]$ are fixed.   By following the arguments as in \cite[Theorem 3.1]{Adhikari2017}, we can show that, for
	every $s\in[0,1]$ the operator $x\mapsto Cx- s\tau_0v_0^*$ is
	bounded, demicontinuous and of  type $(S_+)$ on $\overline{G_1},$ and that
	the equation $H_1(s, x, t, \epsilon) = 0$
	has no solution in $\partial G_1$ for all sufficiently small
	$t\in(0, t_0]$, $\epsilon\in(0, \epsilon_0]$
	and all $s\in[0,1]$.   In doing this, we need to use Lemma~\ref{L1}.  The details are omitted.
	
 It follows from Proposition~\ref{Prop1} that the mapping $H_1(s, x, t, \epsilon)$ is an
	admissible homotopy for the degree,  ${\rm d}_{S_+}$,  of  $(S_+)$-mappings, and 
	${\rm d}_{S_+}(H_1(s,\cdot, t, \epsilon), G_1, 0)$ is well-defined and is
	a constant for all $s\in[0,1]$ and for all $t\in(0,t_0]$,
	$\epsilon\in(0, \epsilon_0]$.

	Assume that
	$$
	{\rm d}_{ S_+}(H_1(1, \cdot,t_1,\epsilon_1), G_1, 0)\ne 0,
	$$
	for some sufficiently small $t_1\in(0, t_0]$ and
	$\epsilon_1\in(0, \epsilon_0]$. Then the equation
	$$
	A_{t_1}^\vp x +Cx +g_{\epsilon_1} x = \tau_0v_0^*
	$$
	has a solution in  $ G_1$. However, this contradicts our
	choice of the number $\tau_0$ in \eqref{6*}. Consequently,
	$$
	{\rm d}_{S_+}(A_t^\vp +C+q_\epsilon, G_1, 0)
	= {\rm d}_{S_+}(H_1(0, \cdot,t,\epsilon), G_1, 0)= 0, \quad t\in(0, t_0],\;
	\epsilon\in(0, \epsilon_0].
	$$
	
	We next consider the homotopy mapping
	\begin{equation}\label{18}
		H_2(s, x, t,\epsilon) = s(A_t^\vp x+Cx +q_\epsilon x)+(1-s)Jx, \quad (s,
		x)\in[0,1]\times\overline{G_2}.
	\end{equation}
	
	We claim that  there exist $t_1\in(0, t_0]$  and $\epsilon_1\in(0,
	\epsilon_0]$ such that  $H_2(s, x, t,\epsilon)= 0$ has
	no solution on $\partial G_2$ for any $s\in[0,1]$, any $t\in(0,
	t_1]$ and any $\epsilon\in(0, \epsilon_1]$. To  prove the claim, we assume the contrary and then  follow the argument used   in  \cite[Theorem 3.1]{Adhikari2017} along with the properties of $A_t^\vp$ established in Lemma~\ref{L1} to arrive at a contradiction to (H2).
		For the sake of convenience, we
	assume that $t_0$ and $\epsilon_0$ are sufficiently small so that we
	may take $t_1 = t_0$ and $\epsilon_1 = \epsilon_0$.
	
	 It follows from Proposition~\ref{Prop1} that  $H_2(s, x, t,\epsilon)$ is
	an admissible homotopy for the degree of  $(S_+)$-mappings and 
	${\rm d}_{\rm S_+}(H_2(s, \cdot, t, \epsilon), G_2, 0)$ is well-defined and
	constant for all $s\in[0,1]$, all $t\in(0, t_0]$ and all
	$\epsilon\in(0,\epsilon_0]$.
	By the invariance of the $(S_+)$-degree, for all $t\in(0, t_0]$ and
	$\epsilon\in(0,\epsilon_0]$, we have
	\begin{align*}
		{\rm d}_{ S_+}(H_2(1, \cdot, t, \epsilon), G_2, 0)
		&= {\rm d}_{ S_+}(A_t^\vp +C+q_\epsilon, G_2, 0)\\
		&= {\rm d}_{ S_+}(H_2(0, \cdot, t, \epsilon), G_2, 0)\\
		&= {\rm d}_{ S_+}(J, G_2, 0)
		= 1.
	\end{align*}
	Thus, for all $t\in(0, t_0]$, $\epsilon\in(0,\epsilon_0]$, we have
	$$
	{\rm d}_{ S_+}(A_t^\vp +C+q_\epsilon, G_1, 0)
	\ne {\rm d}_{ S_+}(A_t^\vp +C+q_\epsilon, G_2,0).
	$$
	Using the excision property of the $(S_+)$-degree, which is an easy
	consequence of its finite-dimensional approximations,  for every $t\in(0, t_0]$ and every
	$\epsilon\in(0, \epsilon_0]$, there exists a
	solution $x_{t, \epsilon}\in G_1\setminus G_2$ of
	$A_t^\vp x+Cx+q_\epsilon x = 0.$ Let $t_n\in(0, t_0]$ and
	$\epsilon_n\in(0, \epsilon_0]$ be such that $t_n\to 0$,
	$\epsilon_n\to 0$ and let $x_n\in G_1\setminus G_2$ be the
	corresponding solutions of $A_t^\vp x +Cx+q_\epsilon x = 0$, i.e.,
	$$
	A_{t_n}^\vp x_n +Cx_n+ q_{\epsilon_n}x_n = 0.
	$$
	We may assume that $x_n\rightharpoonup x_0$ in $X$ and
	$q_{\epsilon_n}x_n\to g^*\in X^*$. We observe that
	$$
	\langle A_{t_n}^\vp x_n , x_n -x_0\rangle
	= -\langle Cx_n + q_{\epsilon_n}x_n, x_n- x_0\rangle.
	$$
	If
	$$
	\limsup_{n\to\infty}\langle Cx_n+q_{\epsilon_n}x_n, x_n-
	x_0\rangle >0,
	$$
	then we obtain a contradiction from (i) of
	Lemma~\ref{L1} with $S=0$ there. Consequently,
	$$
	\limsup_{n\to\infty}\langle Cx_n+q_{\epsilon_n}x_n, x_n-
	x_0\rangle \le 0,
	$$
	and hence
	$$
	\limsup_{n\to\infty}\langle Cx_n, x_n- x_0\rangle \le0.
	$$
	By the $(S_+)$-property of $C$, we obtain
	$x_n\to x_0\in\overline{G_1\setminus G_2}$. Then $Cx_n\rightharpoonup Cx_0$
	and $A_{t_n}^\vp x_n \rightharpoonup -Cx_0-g^*$. Using this in (ii) of
	Lemma~\ref{L1} with $S=0$ there, we obtain $x_0\in D(A)$ and $-Cx_0-g^*\in Ax_0$. By
	a property of the selection $q_{\epsilon_n}x_n$
	as in Hu and Papageorgiou \cite{HP}, we have $g^*\in Tx_0,$
	and therefore $Ax_0+Cx_0+Tx_0\ni 0.$ 
	We also have
	$$
	x_0\in\overline{G_1\setminus G_2}
	= (G_1\setminus G_2)\cup\partial(G_1\setminus G_2)
	\subset(G_1\setminus G_2)\cup\partial G_1\cup\partial G_2.
	$$
	By (H1) and (H2), we have $x_0\notin \partial G_1\cup\partial G_2,$ and hence $x_0\in D(A)\cap(G_1\setminus G_2).$
\end{proof}

\begin{remark}\rm 
	We point out that the condition $A(0) = \{0\}$ on the homogeneous maximal monotone operator $A$ used in Theorem~\ref{Th1} is rather  mild  in  view of Rockafellar's result \cite{Rockafellar} which says that a monotone
	map is locally bounded at every point in the interior of its domain.
\end{remark}

The existence of nonzero
solutions of $Lx+Ax+Cx\ni 0$, where  the maximal monotone operator $A$ is strongly quasibounded and positively homogeneous of degree $\gamma =1,$ is  established  in \cite{Adhikari2017}.  In the following theorem, we extend this result  to an arbitrary degree $\gamma >0$ for the same combination of operators in the spirit of the Berkovits-Mustonen
theory in \cite{BM} and the theories developed in \cite{AK}.  We recall that the maximal monotone operator $A$ investigated in \cite{AK}  is strongly quasibounded.  However, by a result of Hess \cite{Hess}, a strongly quasibounded and positively homogeneous operator of degree $\gamma>0$ is necessarily bounded.  Therefore, in the following theorem, we assume that the maximal monotone operator $A$  is bounded.

\begin{theorem}\label{Th2}
	Assume that $G_1, G_2\subset X$ are open, bounded with $0\in G_2$
	and $\overline{G_2}\subset G_1$.
	Let $L: X\supset  D(L)\to X^*$ be linear maximal monotone with $\overline{D(L)} =X$,
	and $A:X\supset D(A) \to 2^{X^*}$ bounded, maximal monotone
	and positively homogeneous of degree $\gamma>0.$ Also, let $C:\overline{G_1}\to X^*$ be
	bounded, demicontinuous and of type $(S_+)$ w.r.t. $D(L)$.
	Moreover, assume  that
	\begin{enumerate}
		\item[\rm (H3)] there exists $v^*\in X^*\setminus\{0\}$ such that
		$Lx+Ax+Cx\not\ni \la v^*$ for all $(\la,x)\in\mathbf{R}_+\times( D(L)\cap D(A)\cap \partial G_1)$, and
		\item[\rm(H4)] $Lx+Ax+Cx+ \la Jx \not\ni 0$ for all $(\la,x)\in~\mathbf{R}_+\times( D(L)\cap D(A)\cap \partial G_2)$.
	\end{enumerate}
	Then the inclusion $Lx+Ax+Cx\ni 0$ has a solution $x\in
	D(L)\cap D(A)\cap(G_1\setminus G_2)$.
\end{theorem}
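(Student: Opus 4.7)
The proof parallels that of Theorem~\ref{Th1}, with the Browder $(S_+)$-degree replaced by its $L$-perturbation counterpart from Proposition~\ref{Prop2} and Lemma~\ref{L30}. The pivotal choice is the gauge $\varphi(r) = r^{p-1}$ with $p = \gamma + 1$: by Lemma~\ref{L5}, each Yosida approximant $A_t^\vp$ is then itself positively homogeneous of degree $\gamma$. Since $A$ is bounded with $0 \in A(0)$, it is in particular strongly quasibounded, so the hypotheses of Proposition~\ref{Prop2} and Lemma~\ref{L30} are met. I work in $Y = D(L)$ with the graph norm and study, for small $t > 0$, the $(S_+)$-map $\Phi_t(x) := \hat L x + \hat A_t^\vp x + \hat C x + tMx$ on $j^{-1}(\overline{G_k})$, $k = 1, 2$.

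The plan is to establish ${\rm d}_{S_+}(\Phi_t, j^{-1}(G_2), 0) = 1$ and ${\rm d}_{S_+}(\Phi_t, j^{-1}(G_1), 0) = 0$ for all sufficiently small $t$. The first identity comes from the admissible homotopy $s\Phi_t(x) + (1-s)j^* J x$: hypothesis (H4) together with Lemma~\ref{L1} rules out zeros on $\partial G_2$, while at $s = 0$ the degree equals $1$ because $J$ is a duality map and $0 \in G_2$. The second identity comes from the homotopy $\Phi_t(x) - s\tau_0 j^* v^*$: admissibility on $\partial G_1$ for all $s \in [0, 1]$ and small $t$ follows, as in Theorem~\ref{Th1}, from (H3), the $(S_+)$-property of $C$ w.r.t.\ $D(L)$, and Lemma~\ref{L1}; at $s = 1$ one must show the stronger statement that $\Phi_t(x) = \tau_0 j^* v^*$ has no solution at all in $j^{-1}(G_1)$ for large $\tau_0$ and small $t$, which collapses the degree to zero. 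Once both degree values are in hand, excision produces for each small $t_n \to 0^+$ a zero $x_n \in j^{-1}(G_1 \setminus G_2)$ of $\Phi_{t_n}$; Lemma~\ref{L30} bounds $\{x_n\}$ in $Y$, the $(S_+)$-property of $C$ w.r.t.\ $D(L)$ upgrades weak to strong convergence in $X$, and Lemma~\ref{L1} (with $S = 0$) identifies the weak limit of $A_{t_n}^\vp x_n$ as an element of $Ax_0$, yielding a limit $x_0 \in D(L) \cap D(A) \cap \overline{G_1 \setminus G_2}$ with $Lx_0 + Ax_0 + Cx_0 \ni 0$; (H3) and (H4) rule out $x_0 \in \partial G_1 \cup \partial G_2$.

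The main obstacle is the ``no solution at all'' step: the normalization $u_n = x_n / \|A_{t_n}^\vp x_n\|^{1/\gamma}$ exploited in Theorem~\ref{Th1} is unavailable here, since $A$ being bounded keeps $\{A_{t_n}^\vp x_n\}$ bounded. The substitute is the rescaling $y_n := x_n/\tau_n$ applied to a contradiction sequence $\Phi_{t_n}(x_n) = \tau_n j^* v^*$ with $t_n \to 0^+$, $\tau_n \to \infty$, and $x_n \in j^{-1}(G_1)$. Using linearity of $L$ and $M$ together with the positive homogeneity of $A_{t_n}^\vp$ of degree $\gamma$, the rescaled equation reads
\[
\hat L y_n + \tau_n^{\gamma - 1} \hat A_{t_n}^\vp y_n + \tau_n^{-1} \hat C(\tau_n y_n) + t_n M y_n = j^* v^*.
\]
Since $\{A_{t_n}^\vp x_n\}$ and $\{Cx_n\}$ are bounded, the second and third terms tend to $0$ in $Y^*$; pairing with $y_n \in Y$ and using that every remaining term except the $C$-term is nonnegative forces $t_n \|L y_n\|^2 \to 0$, hence $t_n \|L y_n\| \to 0$ and the fourth term also tends to $0$ in $Y^*$. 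Testing the equation against an arbitrary $z \in D(L)$ then yields $\langle L y_n, z\rangle \to \langle v^*, z\rangle$, while the same pairing with $y_n$ gives $\langle L y_n, y_n\rangle \to 0$. Substituting into the monotonicity inequality $\langle L y_n - Lz, y_n - z\rangle \geq 0$ and passing to the limit yields $\langle v^*, z\rangle \leq \langle Lz, z\rangle$ for every $z \in D(L)$; replacing $z$ by $\varepsilon z$ with $\varepsilon > 0$, dividing by $\varepsilon$, and letting $\varepsilon \to 0^+$, together with the same argument applied to $-z$, forces $\langle v^*, z\rangle = 0$ for all $z \in D(L)$. Density of $D(L)$ in $X$ then gives $v^* = 0$, contradicting $v^* \neq 0$. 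This rescaling, enabled by the choice $p = \gamma + 1$ in Lemma~\ref{L5}, is the novel ingredient that removes the restriction $\gamma = 1$.
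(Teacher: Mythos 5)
Your proposal follows the same architecture as the paper's proof: regularize to $\Phi_t=\hat L+\hat A_t^\vp+\hat C+tM$ on $Y=D(L)$, show the degree over $j^{-1}(G_1)$ is $0$ and over $j^{-1}(G_2)$ is $1$, excise, and pass to the limit via Lemma~\ref{L1} and the $(S_+)$-property of $C$ w.r.t.\ $D(L)$. The one step you handle genuinely differently is the nonexistence of solutions of $\Phi_t x=\tau j^*v^*$ for large $\tau$. The paper deduces from the equation that $\|\hat A_{t_n}^\vp x_n\|_{Y^*}\to\infty$, normalizes by this quantity, and uses the homogeneity of $A_t^\vp$ (Lemma~\ref{L5} with $p=\gamma+1$) together with Lemma~\ref{L1} to conclude $v^*/\|j^*v^*\|_{Y^*}\in A(0)=\{0\}$. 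Your rescaling $y_n=x_n/\tau_n$ instead exploits the monotonicity of $L$, $A_{t_n}^\vp$, and $M$ and the density of $D(L)$ to force $v^*=0$; I checked the chain ($\langle Ly_n,y_n\rangle\to 0$, $\hat Ly_n\to j^*v^*$ in $Y^*$, the monotonicity inequality, the $\varepsilon$-scaling) and it is correct. Notably, your argument uses only the boundedness of $A$ and $0\in A(0)$ in this step, not the homogeneity. In fact both arguments are more elaborate than necessary: since $\hat Lx$, $\hat A_t^\vp x$, $\hat Cx$, and $tMx$ are all uniformly bounded in $Y^*$ over $x\in j^{-1}(G_1)\cap B_Y(0,R)$ and $t\in(0,t_0]$ (for $M$ because $\|Mx\|_{Y^*}\le\|Lx\|_{X^*}$, and for $\hat A_t^\vp$ because $A$ is bounded and $J_t^\vp$ maps bounded sets to bounded sets uniformly in $t$), the left-hand side simply cannot equal $\tau j^*v^*$ once $\tau$ is large, so the large-$\tau$ claim is immediate. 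Where the homogeneity with $p=\gamma+1$ is genuinely indispensable is in the paper's normalization, not in yours.

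One point needs repair: your $G_2$ homotopy $s\Phi_t(x)+(1-s)j^*Jx$ has endpoint $\hat J=j^*\circ J\circ j$ at $s=0$, and $\hat J$ is not of type $(S_+)$ on $Y$ with the graph norm: a sequence with $x_n\rightharpoonup x_0$ in $Y$ and $\limsup_{n\to\infty}(\hat Jx_n,x_n-x_0)\le 0$ converges strongly only in $X$, since nothing controls $\{Lx_n\}$. Hence the homotopy is not admissible for the $(S_+)$-degree on $Y$, and ${\rm d}_{S_+}(\hat J, j^{-1}(G_2)\cap B_Y(0,R),0)$ is not defined. The regularizing term $tM$ must be kept with a coefficient independent of $s$, as in the paper's $\widetilde H(s,x)=s(\hat Lx+\hat A_t^\vp x+\hat Cx)+tMx+(1-s)\hat Jx$, whose $s=0$ endpoint $tM+\hat J$ is strictly monotone, of type $(S_+)$ on $Y$, and has degree $1$ by \cite{BR1983}. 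This is a local fix that does not affect the rest of your argument.
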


\begin{proof}
		We begin by observing that a positively homogeneous and bounded maximal monotone operator $A$  of degree $\gamma>0$ satisfies $0\in D(A)$ and $A(0) =\{0\}$. 
	To solve the inclusion
	\begin{equation}\label{T1}
		Lx+Ax+Cx\ni 0, \quad x\in\overline{G_1},
	\end{equation}
	let us consider the associated equation
	\begin{equation}\label{T2}
		\hat Lx+\hat A_t^\vp x+\hat Cx +t Mx=0, \quad t\in (0, \infty),\;
		x\in j^{-1}(\overline{G_1}).
	\end{equation}
	Here, the gauge function  is   $\vp(r) = r^{p-1},$  $1<p<\infty$, and $\gamma = p-1.$
	We can show as in \cite[Lemma~5]{AK2008} that there exists $R>0$ such that the open
	ball $B_Y(0, R) $ contains all the  solutions of \eqref{T2}. We recall  that $Y= D(L).$
	
	We shall prove that \eqref{T2} has a solution $x_t\in j^{-1}(G_1\setminus G_2)$
	for all sufficiently small $t >0$.
	We first claim that there exist $\tau_0>0$ , $t_0>0$ such that
	\begin{equation}\label{T3}
		\hat Lx+\hat A_t^\vp x+\hat Cx +t Mx=\tau j^*v^*
	\end{equation}
	has no solution in $ G^1_R(Y):=j^{-1}(G_1)\cap B_Y(0, R)$ for all
	$t \in (0, t_0]$ and all $\tau \in [\tau_0, \infty)$.
	Assume the contrary and let $\{\tau_n\}\subset (0, \infty)$,
	$\{t_n\}\subset (0, 1)$ and $\{x_n\} \subset G^1_R(Y)$ such that
	$\tau_n\to \infty$, $t_n\to 0$ and
	\begin{equation}\label{T4}
		\hat Lx_n+\hat A_{t_n}^\vp x_n+\hat Cx_n +t_n Mx_n=\tau_n j^*v^*.
	\end{equation}
	We note that $j^*$ is one-to-one because $j(Y) = Y,$  which is dense in $X$.
	This implies that $j^*v^*$ is nonzero, and therefore
	$\|\tau_n j^*v^*\|_{Y^*}\to+\infty$. Also, the sequence $\{x_n\}$
	is bounded in $Y$ and so we may assume that $x_n\rightharpoonup x_0$
	in $X$ and $Lx_n\rightharpoonup Lx_0$ in $X^*$. In particular, $\{Lx_n\}$
	is bounded in $X^*$. Since $Mx_n \in j^*(X^*)$, we have $J^{-1}(Lu)\in D(L^*)$ and
	$$
	Mx_n = j^* L^* J^{-1}(Lx_n).
	$$
	Since $j^*$, $L^*$, $J^{-1}$ are bounded, we obtain the boundedness of
	$\{Mx_n\}$. It is clear that $\hat Lx_n$ and $\hat Cx_n$ are bounded in $Y^*$, and
	therefore \eqref{T4} implies that $\| \hat A_{t_n}^\vp x_n\|_{Y^*} \to \infty$.	
	Since $A$ is positively homogeneous of degree $\gamma$, applying Lemma~\ref{L5} for $\gamma = p-1$ shows that each $A_{t_n}^\vp$ is also positively 	homogeneous of  $\gamma=p-1$.
	Consequently,
	\begin{equation}\label{T55}
		\frac{\hat A_{t_n}^\vp  x_n}{\|\hat A_{t_n}^\vp  x_n\|_{Y^*}} = \hat A_{t_n}^\vp\left(\frac{ x_n}{\|\hat A_{t_n}^\vp  x_n\|_{Y^*}^{1/\gamma}}\right)
	\end{equation} for all $n$. Define $\beta_n := 1/{\|\hat A_{t_n}^\vp  x_n\|_{Y^*}} \mbox{ and } \delta_n:= \beta_n^{1/\gamma}. $  Since $\| \hat A_{t_n}^\vp x_n\|_{Y^*} \to \infty$, it follows that $\beta_n x_n \to 0$  and $\delta_n x_n \to 0$ in $X$ as $n\to\infty$. 
	From \eqref{T4} and \eqref{T55}, we find
	\begin{equation}\label{T5}
		\hat L (\beta_n x_n)+\hat A_{t_n}^\vp (\delta_n x_n)+\beta_n\hat Cx_n+t_n\beta_n Mx_n
		=\tau_n\beta_n j^*v^*.
	\end{equation}
	Because $\|\hat A_{t_n}^\vp (\delta_n x_n)\|_{Y^*} = 1$ and the remaining terms on the left in  \eqref{T5} converge to $0$ in $X^*$ as $n\to\infty$, we get 
	$
	\tau_n\beta_n \to 1/{\|j^*v^*\|_{Y^*}},
	$
	and therefore
	$\hat A_{t_n}^\vp (\delta_n x_n )\to y_0,$
	where
	$y_0 = j^*v_*/{\|j^*v^*\|_{Y^*}}.	$
	Since $u_n:= \delta_n x_n\to 0$ as $n\to\infty$, we have
	$
	\lan \hat A_{t_n}^\vp u_n, u_n\ran 
	\to\lan y_0, 0\ran =0
	$ as $n\to\infty$. 
	By Lemma \ref{L1}, (ii), we have
	$
	y_0\in A(0) =\{0\},
	$
	which is a contradiction to $\|y_0\|_{Y^*} = 1$.

	We now consider the homotopy $H: [0,1]\times Y \to Y^*$ defined by
	\begin{equation}\label{T6}
		H(s, x) = \hat Lx +\hat A_t^\vp  x+\hat Cx+t Mx - s\tau_0 j^*v^*, \quad
		s\in [0, 1], \; x\in j^{-1}(\overline {G_1}),
	\end{equation}
	where $t\in (0, t _0]$ is fixed. It can be easily seen that
	$C-s\tau_0 v^*$ is bounded demicontinuous on $\overline {G_1}$ and of type
	$(S_+)$ w.r.t. $D(L)$.

	We now show that the equation $H(s, x) =0$ has no solution on the boundary
	$\partial G_R^1(Y)$. Here, the number $R>0$ is increased, if necessary,  so
	that the ball $B_Y(0, R)$ now also contains all the solutions $x$ of $H(s, x) = 0$.
	To this end, assume the contrary so that there exist $\{t_n\}\subset (0, t_0]$,
	$\{s_n\}\subset [0, 1]$, and $\{x_n\}\subset \partial G_R^1(Y)$ such that
	$t_n\to 0$, $s_n\to s_0$, $x_n\rightharpoonup x_0$ in $Y$,
	$A_{t_n}^\vp x_n\rightharpoonup w^*$ in $X^*,$  $Cx_n\rightharpoonup c^*$ and
	\begin{equation}\label{T7}
		\hat Lx_n +\hat A_{t_n}^\vp  x_n+\hat Cx_n+t_n Mx_n =s_n\tau_0 j^*v^*.
	\end{equation}
	Here, the boundedness of $\{A_{t_n}^\vp x_n\}$ follows as in Step I of
	\cite[Proposition~1]{AK2016}, except that we now use  $A_{t_n}^\vp $ in place of the operators $T_{s_n}$ used in \cite{AK2016}. Since $x_n \rightharpoonup x_0$ in $Y$, we have
	$x_n \rightharpoonup x_0$ in $X$ and $Lx_n \rightharpoonup Lx_0$ in $X^*$.
	Also, since $x_n\in B_Y(0, R)$ and
	$$
	\partial(j^{-1}(G_1)\cap B_Y(0, R)) \subset \partial(j^{-1}(G_1))
	\cup \partial B_Y(0, R) \subset j^{-1}(\partial G_1) \cup \partial B_Y(0, R),
	$$
	we have $x_n\in j^{-1}(\partial G_1) = \partial G_1\cap Y \subset \partial G_1$.
	We now follow the arguments as in \cite[Theorem 2.2]{Adhikari2017} in conjunction with 	Lemma~\ref{L1} to arrive at 
	$$	\langle Lx_0 + w^*+ Cx_0 - s_0\tau_0 v^*, u\rangle = 0	$$
	for all $u\in Y$, where $x_0\in D(A)$ and $w^*\in Ax_0$. Since $Y$ is dense in $X$, we have
	$	Lx_0 + Tx_0+ Cx_0 \ni s_0\tau_0 v^*,$
	which contradicts the hypothesis (H3) because
	$x_0\in D(L)\cap D(T)\cap \partial G_1$.
		
	We shrink $t_0$, if necessary,  so that
	$$	H(s, x) =0,\quad s\in [0, 1], \; x\in \overline{G_R^1(Y)}	$$
	has no solution on the boundary $\partial G_R^1(Y)$ for all $t\in (0, t_0]$
	and all $s\in [0, 1]$.  It now follows from Proposition~\ref{Prop2} that  $H(s, x)$ is an admissible homotopy for
	the $(S_+)$-degree, ${\rm d}_{S_+},$ and therefore
	${\rm d}_{S_+} (H(s, \cdot), G_R^1(Y), 0)$, is well-defined and remains
	constant for all $s\in [0, 1]$. Also, by Proposition~\ref{Prop2},
	the limit  $$ \lim_{t \to0+}{\rm d}_{S_+} (H(1, \cdot),
	G_R^1(Y), 0)
	$$ exists.
	By shrinking $t_0$ further,  if necessary,
	we find that
	$	{\rm d}_{S_+} (H(1, \cdot), G_R^1(Y), 0) = \mbox{a constant}
	\text{ for all } t\in (0, t_0].	$
	Suppose, if possible, that
	$$	{\rm d}_{ S_+} (H(1, \cdot), G_R^1(Y), 0)\ne 0$$ for some $t_1\in (0, t_0]$. Then there exists $x_0\in G_R^1(Y)$ such that
	$$\hat Lx +\hat A_{t_1}^\vp x+\hat Cx+t_1 Mx = \tau_0 j^*v^*.$$ This contradicts the choice of $\tau_0$ as stated in \eqref{T3}. Since
	$${\rm d}_{S_+} (H(0, \cdot), G_R^1(Y), 0)= {\rm d}_{S_+} (H(1, \cdot), G_R^1(Y), 0),$$
	we have
	\begin{equation}\label{D1}
		{\rm d}_{S_+}(\hat L+ \hat A_t^\vp + \hat C + t M, G_R^1(Y), 0)
		={\rm d}_{S_+}(H(0, \cdot), G_R^1(Y), 0) = 0
	\end{equation}
	for all $t\in (0, t_0]$.
		
	Next, we consider the homotopy $\widetilde{H}: [0, 1]\times Y\to Y^*$ defined by
	$$	\widetilde{H}(s, x)= s(\hat Lx +\hat A_t^\vp  x+\hat Cx)
	+t  Mx+ (1-s)\hat Jx, \quad s\in [0, 1], \; x\in j^{-1}(\overline {G_2}).	$$
	As in \cite[Step III, p. 29]{AK2016}, it can be shown that there exists $t_0>0$
	(shrink it to a  smaller number  if necessary) such that all
	the solutions of
	$$
	\widetilde{H} (s, x) = 0, \; t\in (0, t_0], \;s\in [0, 1]
	$$
	are bounded in $Y$. We enlarge the previous number $R>0$, if necessary,
	so that all solutions of $\widetilde{H}(s, x) = 0$ as described above are contained
	in $B_Y(0, R)$ in $Y$.

	Again, by following the arguments similar to that in \cite[Theorem~2.2]{Adhikari2017},    we can show the existence of  $t_1\in (0, t_0]$ such that the equation
	$\widetilde{H}(s, x) = 0$ has no solutions on $\partial G_R^2(Y) $
	for any $t\in (0, t_1]$ and any $s\in [0, 1]$.
	Here, $G_R^2(Y) := j^{-1}(G_2)\cap B_Y(0, R)$. In fact, if we assume the contrary, we can arrive at a situation that contradicts (H4). 
		At this point, we replace the number $t_0$ chosen previously with $t_1$ and
	call it $t_0$ again. Let us fix $t\in (0, t_0]$ and consider the homotopy equation
	\begin{equation}\label{T15}
		\widetilde{H} (s, x) = s(\hat Lx +\hat A_t^\vp  x+\hat Cx)+t Mx
		+ (1-s)\hat Jx =0, \;\; s\in [0, 1], \; x\in \overline {G_R^2(Y)}.
	\end{equation}
	It is already discussed that \eqref{T15} has no solution on $\partial {G_R^2(Y)}$.
	We note that  $\widetilde{H}$ is an affine homotopy of  bounded
	demicontinuous operators of type $(S_+)$ on $\overline {G_R^2(Y)}$;
	namely, $\hat L +\hat A_t^\vp  +\hat C+t M$ and $t M+ \hat J$. We also note here
	that $t M + \hat J$ is strictly monotone. In view of Proposition~\ref{Prop2},  it follows that $\widetilde{H}(s, x)$
	is an admissible homotopy for the $(S_+)$-degree, ${\rm d}_{S_+}$, which satisfies
	\begin{equation}\label{T16}
		{\rm d}_{S_+}(\widetilde{H} (1, \cdot), G_R^2(Y), 0)
		= {\rm d}_{ S_+}(\widetilde{H} (0, \cdot), G_R^2(Y), 0).
	\end{equation}
	This implies
	\begin{equation}\label{D2}
		{\rm d}_{S_+}(\hat L+\hat A_t^\vp +\hat C+t M, G_R^2(Y), 0)
		= {\rm d}_{S_+}(t M+ \hat J, G_R^2(Y), 0)=1
	\end{equation}
	for all $t \in (0, t _0]$.
	The last equality follows from \cite[Theorem 3, (iv)]{BR1983}.
	From \eqref{D1} and \eqref{D2}, we obtain
	$$
	{\rm d}_{S_+}(\hat L+\hat A_t ^\vp +\hat C+t  M, G_R^1(Y), 0)
	\ne {\rm d}_{S_+}(\hat L+\hat A_t^\vp +\hat C+t  M, G_R^2(Y), 0)
	$$
	for all $t \in (0,t_0]$.
	By the excision property of the $(S_+)$-degree, for each $t \in (0, t _0]$,
	there exists a solution $x_t\in G_R^1(Y)\setminus G_R^2(Y)$ of the equation
	$$
	\hat Lx+\hat A_t^\vp x+\hat Cx+t Mx=0.
	$$
	We now pick a sequence $\{t_n\}\subset (0, t_0]$ such that $t_n\to 0$
	and denote the corresponding solution $x_t$ by $x_n$, i.e., 
	$$
	\hat Lx_n+\hat A_{t_n}^\vp x_n +\hat Cx_n+t_n Mx_n=0.
	$$
	Since $Y$ is reflexive, we have $x_n\rightharpoonup x_0\in Y$ by
	passing to a subsequence. This implies $x_n\rightharpoonup x_0$ in $X$ and
	$Lx_n \rightharpoonup Lx_0$ in $X^*$. By the boundedness (therefore strong quasiboundedness) of $A$,
	we may assume, in view of Lemma~\ref{L2}, that $A_{t_n}^\vp x_n\rightharpoonup w^*\in X^*$.
	By a standard argument in conjunction with Lemma~\ref{L1} and the $(S_+)$-property of $C$ w.r.t. $D(L)$, 
	we obtain $x_n\to x_0\in\overline{G_R^1(Y)\setminus G_R^2(Y)}.$ By
	Lemma~\ref{L1} and the demicontinuity of $C$, we have $x_0\in D(A)$, $w^*\in Ax_0$, and $Cx_n \rh Cx_0$ in $X^*.$ Thus,
	$Lx_0 +Ax_0+Cx_0 \ni 0$. 

Finally, in order to show $x_0\in G_1\setminus G_2$, we note that
	\[
	G_R^1(Y)\setminus G_R^2(Y) =(G_1\setminus G_2) \cap Y\cap B_Y(0, R)
	\subset G_1\setminus G_2.
	\]
Consequently, 
	$x_n\in G_1\setminus G_2$ for all $n$, and therefore
	$$
	x_0\in\overline{G_1\setminus G_2} \subset (G_1\setminus G_2)
	\cup \partial (G_1\setminus G_2)\subset (G_1\setminus G_2)
	\cup \partial G_1 \cup \partial G_2.
	$$
	By   (H3) and (H4), $x_0\not\in \partial G_1\cup\partial G_2$  and hence $x_0\in D(L)\cap D(T)\cap (G_1\setminus G_2)$.
\end{proof}

\subsection{Open Problem} Does Theorem~\ref{Th2} hold true if the boundedness of  $A$ is dropped? 
Since a positively homogeneous operator that is  strongly quasibounded  is necessarily bounded,  it is desirable to determine whether Theorem~\ref{Th2} holds  if  $A$ is assumed to be ``quasibounded".  	An operator $A:X \supset D(A) \to 2^{X^*}$ is said to be \textit{quasibounded} if for every $S>0$ there exists $K(S)>0$ such that
$\|x\| \le S$ and $\langle x^*, x \rangle \le S\|x\|$ for some $x^*\in Ax$
imply $\|x^*\| \le K(S)$. The notions of quasibounded and strongly quasibounded operators  were introduced in Hess~\cite{Hess}. 

\section{Applications}\label{S4}
In this section, we apply Theorem~\ref{Th1} and Theorem~\ref{Th2}  to elliptic and parabolic boundary value problems in general divergence form which are obtained  by modifying relevant examples  from Berkovits and Mustonen ~\cite{BM},  Kittil\"a ~\cite{Kittila},    and Adhikari~\cite{Adhikari2017}.

\begin{application}{\rm 
We consider the space $X= W_0^{m,p}(\Omega)$ with the
integer $m\ge 1$, the number $p\in(1,\infty)$, and the
domain $\Omega \subset \mathbf{R}^N$  with smooth boundary. We let $N_0$ denote
the number of all multi-indices
$\alpha=(\alpha_1,\dots,\alpha_N)$ such that
$|\alpha | = \alpha_1 +\cdots +\alpha_N\le m$.
For $\xi = (\xi_\alpha)_{|\alpha|\le m}\in\mathbf{R}^{N_0}$,
we have a representation $\xi=(\eta,\zeta)$, where
$\eta=(\eta_\alpha)_{|\alpha|\le m-1}\in\mathbf{R}^{N_1}$,
$\zeta =(\zeta_\alpha)_{|\alpha|=m}\in\mathbf{R}^{N_2}$
and $N_0=N_1+N_2$. We let
$$
\xi(u)= (D^\alpha u)_{|\alpha|\le m} ,\quad
\eta(u)= (D^\alpha u)_{|\alpha|\le m-1}, \quad \mbox{and}\quad 
\zeta(u)= (D^\alpha u)_{|\alpha|= m},
$$
where
$
D^\alpha  =\prod_{i=1}^N\Big(\frac{\partial}{\partial  x_i}\Big)^{\alpha_i}.
$
We write $\nabla u := (D^\alpha u)_{|\alpha|= 1}$, and when   $|\alpha| = k\in\{1, 2, \dots, m\}$, we simply  write $D^k u:=	(D^\alpha u)_{|\alpha|=k}$.	Also, define $q:= p/(p-1)$.

We now consider the partial differential expression in divergence form
$$
\sum_{|\alpha|\le m}(-1)^{|\alpha|}
D^\alpha A_\alpha(x, \xi(u)),\quad x\in\Omega.
$$
The functions $A_\alpha :\Omega\times\mathbf{R}^{N_0}\to \mathbf{R}$
are assumed to be Carath\'eodory, i.e.,
each $A_\alpha(x, \xi)$ is measurable in $x$ for fixed
$ \xi\in\mathbf{R}^{N_0}$ and continuous in $\xi$ for
almost all $x\in\Omega$.
We assume the following conditions on $A_\alpha$:
\begin{itemize}
	\item[(H5)] There exist $p\in(1,\infty),$ $c_1 >0,$
	and $\kappa_1\in L^q(\Omega)$ such that
	$$
	|A_\alpha(x, \xi)|\le c_1|\xi|^{p-1}+
	\kappa_1(x),\quad x\in\Omega,\;\;\xi\in\mathbf{R}^{N_0},\;\;|\alpha| \le m.
	$$
	\item[(H6)] The Leray-Lions condition
	$$
	\sum_{|\alpha|=m} [A_\alpha(x, \eta, \zeta_1)-
	A_\alpha(x, \eta, \zeta_2)](\zeta_{1_\alpha}-\zeta_ {2_\alpha})>0
	$$
	is satisfied for every $x\in \Omega$, $\eta\in\mathbf{R}^{N_1}$ and 
	$\zeta_1, \zeta_2\in\mathbf{R}^{N_2}$ with
	$\zeta_1\ne \zeta_2$.
	\item[(H7)]
	$$
	\sum_{|\alpha|\le m} [A_\alpha(x, \xi_1)-
	A_\alpha(x, \xi_2)](\xi_{1_\alpha}-\xi_ {2_\alpha})\ge0
	$$
	is satisfied for every $x\in \Omega$ and
	$\xi_1, \xi_2\in\mathbf{R}^{N_0}$.
	\item[(H8)] There exist $c_2>0$, $\kappa_2\in
	L^1(\Omega)$ such that
	$$
	\sum_{|\alpha|\le m}A_\alpha(x,\xi)\xi_\alpha \ge
	c_2|\xi|^p-\kappa_2(x),\quad x\in\Omega,\;
	\xi\in\mathbf{R}^{N_0}.
	$$
	\item[(H9)]  Each $A_\alpha(x,\xi)$ is homogeneous of degree $\gamma>0$ w.r.t. $\xi$.  
\end{itemize}
If an operator $A: W_0^{m,p}(\Omega)\to W^{-m, q}(\Omega)$ is
given by
\begin{equation} \label{176}
	\langle Au, v\rangle = \int_\Omega\sum_{|\alpha|\le m}
	A_\alpha(x, \xi(u))D^\alpha v,\quad u,\,v\in
	W_0^{m,p}(\Omega),
\end{equation}
then the conditions (H5), (H7) imply that $A$ is bounded,
continuous, and monotone  as discussed in Kittil\"a  
\cite[pp. 25-26]{Kittila} and Pascali and Sburlan
\cite[pp. 274-275]{PS}. Since $A$ is continuous,
it is maximal monotone. 
Moreover,  the condition (H9) implies that $A$ is  positively homogeneous of degree $\gamma>0$.
For example,  for  $m=1$, we have $|\alpha| \le 1$, and when  
$$A_\alpha(x,\eta,\zeta) =\begin{cases}
	|\zeta|^{p -2}\zeta_\alpha&\mbox{ for } |\alpha| = 1\\
	0 &\mbox{ for } |\alpha| = 0,\\
\end{cases}
$$
the operator $A$ in (\ref{176}) is given by $A := -\Delta_p$, where  $\Delta_p $ is the $p-$Laplacian  from $W_{0}^{1, p}(\Omega)$ to $W^{-1, q}(\Omega)$
defined as
$$\Delta_p u:=  {\rm div}\left(|\nabla u|^{p-2} \nabla u\right),\quad u\in W_{0}^{1, p}(\Omega).$$
It is clear that $ \Delta_p$ is positively homogeneous  of degree $p-1$ .

Similarly, the condition (H5), with $A_\alpha$ replaced by $C_\alpha$,
implies that the operator
\begin{equation} \label{177}
	\langle Cu, v\rangle
	= \int_\Omega\sum_{|\alpha|\le m} C_\alpha(x, \xi(u))D^\alpha v,\quad\quad u,\,v\in
	W_0^{m,p}(\Omega),
\end{equation}
is a bounded continuous mapping. We also know that
conditions (H5), (H6), and (H8), with $C_\alpha$ in place
of $A_\alpha$ everywhere, imply that the operator $C$ is of
type $(S_+)$ (see Kittil\"a \cite[ p. 27]{Kittila}).

We also consider a multifunction $H:\Omega\times \mathbf{R}^{N_1}\to 2^{\mathbf{R}}$
such that
\begin{itemize}
	\item [(H10)] $H(x, r) = [\varphi(x, r), \psi(x, r)]$ is measurable in $x$
	and upper semicontinuous in $r$, where $\varphi, \psi :\Omega\times \mathbf{R}^{N_1}\to \mathbf R$
	are measurable functions; and
	\item[(H11)] $|H(x, r)| = \max[|\varphi(x, r)|, |\psi(x, r)|] \le a(x) + c_2|r| $
	a.e. on $\Omega\times\mathbf{R}^{N_1},$  where $a(\cdot) \in L^q(\Omega)$, $c_2>0$.
\end{itemize}
Define $T:W_0^{m,p}\to 2^{W^{-m, q}(\Omega)}$
by
\begin{align*}
	Tu= \Big\{& h\in W^{-m, q}(\Omega) : \exists w\in L^q(\Omega) \text{ such that }
	w(x)\in H(x, u(x)) \\
	&\text{ and }  \langle h, v\rangle = \int_{\Omega} w(x) v(x)
	\text{ for all } v\in W_0^{m, p}(\Omega)\Big\}.
\end{align*}
It is well-known that $T$ is upper-semicontinuous and compact with closed and convex
values  (see \cite[p. 254]{HP}), and therefore $T$ is of class $(P)$.

We now state the following theorem as an application of Theorem~\ref{Th1}.
\begin{theorem}\label{Th3}
	Assume that the operators $A$, $C,$ and $T$ are defined as above. Assume, further, that the rest of the conditions
	of Theorem~\ref{Th1} are satisfied for two
	balls $G_1 = B_{\delta_1}(0)$ and $G_2= B_{\delta_2}(0)$, where $0<\delta_2<\delta_1$. Then
	the Dirichlet boundary value problem
\[\begin{cases}\displaystyle 
		\sum_{|\alpha|\le m}(-1)^{|\alpha|}
D^\alpha \Big(A_\alpha(x, \xi(u))+
C_\alpha(x, \xi(u))\Big)+ H(x, u) \ni 0,\; x\in\Omega,\\
	D^\alpha u(x) = 0,\quad
x\in\partial\Omega,\quad |\alpha| \le m-1,
\end{cases}
 \]
	has a ``weak" nonzero solution
	$u\in B_{\delta_1}(0) \setminus B_{\delta_2}(0)\subset W_0^{m,p}(\Omega)$, which satisfies the
	inclusion $Au + Cu +Tu\ni 0$.
\end{theorem}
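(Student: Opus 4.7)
The plan is to verify that the operators $A$, $C$, and $T$ constructed from the Carath\'eodory functions $A_\alpha, C_\alpha$ and the multifunction $H$ fulfill every structural hypothesis of Theorem~\ref{Th1}, and then invoke Theorem~\ref{Th1} directly on the balls $G_1=B_{\delta_1}(0)$ and $G_2=B_{\delta_2}(0)$ to produce the weak solution.

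First I would handle $A:W_0^{m,p}(\Omega)\to W^{-m,q}(\Omega)$. By the growth condition (H5) and the monotonicity condition (H7), $A$ is everywhere defined, bounded, and monotone, and it is continuous in the strong topologies by a standard Nemytskii-type argument (see the references to Kittil\"a and to Pascali--Sburlan cited in the excerpt). A monotone operator that is defined on all of $X$ and continuous is maximal monotone. The positive homogeneity of degree $\gamma$ follows from (H9): for $s>0$,
\begin{equation*}
\langle A(su), v\rangle=\int_{\Omega}\sum_{|\alpha|\le m}A_\alpha(x,\xi(su))D^\alpha v=s^{\gamma}\int_{\Omega}\sum_{|\alpha|\le m}A_\alpha(x,\xi(u))D^\alpha v=s^{\gamma}\langle Au,v\rangle.
\end{equation*}
Moreover, (H9) applied with $s=0$ forces $A_\alpha(x,0)=0$ for a.e.\ $x\in\Omega$ and every $\alpha$, so $A(0)=0$, giving $A(0)=\{0\}$ as a single-valued operator.

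Next I would verify that the Nemytskii-type operator $C$ defined in \eqref{177} is bounded, demicontinuous, and of type $(S_+)$. Boundedness and continuity follow from (H5) (with $C_\alpha$ replacing $A_\alpha$) exactly as for $A$, while the $(S_+)$-property is the content of the classical Leray--Lions argument combined with (H6) and the coercivity (H8); this is essentially recalled in the excerpt with a reference to Kittil\"a. For $T$, the measurability/upper-semicontinuity/growth conditions (H10)--(H11) on $H$ are precisely those used by Hu and Papageorgiou to show that the superposition multioperator $T$ is compact, upper-semicontinuous with nonempty closed convex values in $W^{-m,q}(\Omega)$, i.e.\ of class $(P)$; I would cite that directly.

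Having verified the structural hypotheses, the assumptions of Theorem~\ref{Th1} listed in the statement (the boundary conditions (H1) and (H2) relative to $G_1=B_{\delta_1}(0)$ and $G_2=B_{\delta_2}(0)$) are assumed to hold by hypothesis. Theorem~\ref{Th1} then yields $u\in D(A)\cap(G_1\setminus G_2)$ with $Au+Cu+Tu\ni 0$. Since $D(A)=W_0^{m,p}(\Omega)$, this $u$ is a nonzero element of $B_{\delta_1}(0)\setminus B_{\delta_2}(0)$, and the inclusion, when written out against test functions $v\in W_0^{m,p}(\Omega)$, is exactly the weak formulation of the stated Dirichlet problem. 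The only delicate verification is the $A(0)=\{0\}$ step, where one must use $\gamma>0$ to conclude $A_\alpha(x,0)=0$ from the positive homogeneity hypothesis; everything else is a direct compilation of standard facts about divergence-form operators.
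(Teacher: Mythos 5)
Your proposal is correct and follows essentially the same route as the paper: the paper likewise presents Theorem~\ref{Th3} as a direct corollary of Theorem~\ref{Th1}, justifying beforehand that (H5), (H7) make $A$ bounded, continuous, monotone (hence maximal monotone), that (H9) gives positive homogeneity, that (H5), (H6), (H8) with $C_\alpha$ make $C$ bounded, continuous and of type $(S_+)$, and that (H10)--(H11) make $T$ of class $(P)$ via Hu--Papageorgiou. Your extra observation that $\gamma>0$ in (H9) forces $A_\alpha(x,0)=0$, hence $A(0)=\{0\}$, is a correct and worthwhile verification that the paper leaves implicit.
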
}

\end{application}

\begin{application}
	{\rm
Let $\Omega$ be a bounded open set in $\mathbf{R}^N$ with smooth boundary,
$m\ge 1$ an integer, and $a>0$. Set $Q= \Omega\times [0, a]$.
Consider differential operators of the form
\begin{equation}\label{IV1}
			\frac{\partial u}{\partial t}(x,t) 	+\sum_{|\alpha|\le m}(-1)^{|\alpha|}D^\alpha \Big(A_\alpha(x,t,\xi(u(x,t))\\
			+C_\alpha(x,t,\xi(u(x,t))\Big)\\
\end{equation}
in $Q$. The functions $A_\alpha=A_\alpha
(x,t,\xi)$ and $C_\alpha=C_\alpha
(x,t,\xi)$ are defined for $(x,t)\in Q$,
$\xi=(\xi_\alpha)_{ |\alpha|\le m}=(\eta,\zeta)\in\mathbf{R}^{N_0}$ with
$\eta=(\eta_\gamma)_{ |\alpha|\le m-1}\in\mathbf{R}^{N_1}$,
$\zeta=(\zeta_\alpha)_{ |\alpha|=m}\in\mathbf{R}^{N_2}$, and $N_1+N_2 = N_0$.
We assume that each  $A_\alpha(x,t,\xi)$ satisfies the usual
Carath\'eodory condition. We consider the following conditions.
\begin{itemize}
	\item[(H12)] (Continuity) For some $p > 1$, $c_1>0$, $g\in L^q(Q)$ with
	$q=p/(p-1)$, we have
	\[
	|A_\alpha(x,t,\eta,\zeta)| \le c_1(|\zeta|^{p-1}+|\eta|^{p-1}+g(x,t)),
	\]
	for $(x,t)\in Q$, $\xi=(\eta,\zeta)\in\mathbf{R}^{N_0}$  and $|\alpha |\le m$.
	
	\item[(H13)] (Monotonicity)
	$$
	\sum_{|\alpha| \le m}(A_\alpha(x,t,\xi_1)-
	A_\alpha(x,t,\xi_2))(\xi_{1_\alpha}-\xi_{2_\alpha}) \ge 0 \mbox{ for } 
	(x,t)\in Q \mbox{ and } \xi_1,\xi_2\in\mathbf{R}^{N_0}.
	$$
	
	\item[(H14)] (Leray-Lions)
	\[
	\sum_{|\alpha| = m}(A_\alpha(x,t,\eta,\zeta)-
	A_\alpha(x,t,\eta,\zeta^*))(\zeta_\alpha-\zeta^*_\alpha) > 0,
	\]
	for $(x,t)\in Q$, $\eta\in\mathbf{R}^{N_1}$ and $\zeta,\zeta^*\in\mathbf{R}^{N_2}$.
	
	\item[(H15)] (Coercivity) There exist $c_0>0$ and $h\in L^1(Q)$ such that
	$$
	\sum_{|\alpha|\le m}A_\alpha(x,t,\xi) \ge c_0|\xi|^p-h(x,t),\quad (x,t)\in Q \mbox{ and }
	\xi\in\mathbf{R}^{N_0}.
	$$
	\item[(H16)] 	 Each $A_\alpha(x, t, \xi)$ is homogeneous of degree $\gamma>0$ w.r.t. $\xi$.  
\end{itemize}

Under the condition (H12), the second term of \eqref{IV1} with $C_\alpha =0$ generates a
continuous bounded operator
$ A:X\to X^*$ 
defined by
$$
\langle Au,v\rangle=\sum_{|\alpha|\le m}\int_QA_\alpha(x,t,\xi(u(x, t)))D^\alpha v,
\quad u,v\in X,
$$ where $X=L^p(0,a;V), X^*=L^q(0,a;V^*)$,
and $V=W_0^{m,p}(\Omega).$ 
With the additional conditions (H13) and (H16), the operator  $A$ is  maximal monotone  and positively homogeneous of degree $\gamma.$  
Under (H12), (H14), and (H15) with $A_\alpha$ replaced by $C_\alpha$ and  other obvious
changes,  the second term in \eqref{IV1} with $A_\alpha = 0$ generates a continuous, bounded
operator $C$  defined as 
$$
\langle Cu,v\rangle=\sum_{|\alpha| \le m}\int_QC_\alpha(x,t,\xi(u(x, t)))D^\alpha v,
\quad u,v\in X,
$$
which satisfies the condition $(S_+)$ w.r.t. $D(L)$, where the
operator $L$ is defined as follows.  The operator $\partial/\partial t$ generates an operator $L:X\supset D(L)\to X^*$,
where
$$
D(L) = \{v\in X: v'\in X^*,\; v(0)=0\},
$$
via the relation
$$
\langle Lu,v\rangle = \int_0^a\lan u'(t),v(t)\ran_V \,\text{d}t,\quad u\in D(L),\; v\in X,
$$
where   $\lan \cdot, \cdot \ran_V$ is the duality pairing in $V^*\times V.$
The symbol $u'(t)$  is the generalized derivative of $u(t)$, i.e., 
$$
\int_0^a  u'(t) \varphi(t)\text{d}t
=-\int_0^a  \varphi'(t)  u(t) \,\text{d}t,\quad \varphi\in C_0^\infty(0,a).
$$
We can verify, as in Zeidler \cite{Zeidler1}, that $L$ is   densely
defined, linear and  maximal monotone.

 Given $h\in L^q(Q),$ define  $h^*\in X^*$  by
 \[\lan h^*, v\ran =\int_Q h v, \quad v\in X.\]
 }
\end{application}
As an application of Theorem~\ref{Th2}, we obtain the following theorem.

\begin{theorem} \label{thm4}
	Assume that the operators $L, A,$ and $C$ are as above, with
	$A_\alpha$ satisfying {\rm (H12), (H13),} and {\rm (H16)},  and
	$C_\alpha$ in place of $A_\alpha$ satisfying {\rm (H12), (H14),}
	and {\rm (H15)}.  Assume, for a given $h \in L^q(Q), $   that the
	rest of the conditions of Theorem~\ref{Th2} are satisfied  when $C$ is replaced with $C-h^*$  for two
	balls $G_1 = B_{\delta_1}(0)$ and $G_2= B_{\delta_2}(0)$ in $X = L^p(0, a; V)$, where
	$0<\delta_2<\delta_1$ and $V= W_0^m(\Omega)$. 
	 Then	the initial-boundary value problem
	\[\begin{cases}\displaystyle
					\frac{\partial u}{\partial t}
		+\sum_{|\alpha|\le m}(-1)^{|\alpha|}D^\alpha \Big(A_\alpha(x,t,\xi(u))+C_\alpha(x,t,\xi(u))\Big)=h(x, t),\\
				D^\alpha u(x, t) = 0,\quad
		(x, t)\in\partial\Omega\times [0, a],\quad |\alpha| \le m-1,\\
		u(x,0) = 0, \quad x\in \Omega,
	\end{cases}\]
has a ``weak" nonzero solution $u\in B_{\delta_1}(0) \setminus B_{\delta_2}(0) \subset L^p(0, a;V)$ satisfying
	$$
	Lu + Au + Cu = h^*.
	$$
	
\end{theorem}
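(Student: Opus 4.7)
The plan is to recognize that Theorem~\ref{thm4} is essentially a translation of Theorem~\ref{Th2} into the language of parabolic PDEs, so the proof reduces to (a) verifying that the operator triple $(L,A,C-h^*)$ meets the structural hypotheses of Theorem~\ref{Th2}, and (b) translating an abstract solution $u\in D(L)\cap D(A)\cap(G_1\setminus G_2)$ of $Lu+Au+Cu=h^{*}$ back to a weak solution of the initial-boundary value problem. The abstract setting is in force because $X=L^{p}(0,a;V)$ with $V=W_{0}^{m,p}(\Omega)$ is reflexive and, by Troyanski's renorming, may be taken to have a locally uniformly convex $X$ and $X^{*}$.

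First I would check the operator framework. The time-derivative operator $L$ with $D(L)=\{v\in X:v'\in X^{*},\,v(0)=0\}$ is, as stated in the excerpt, densely defined, linear, and maximal monotone, so the hypothesis $\overline{D(L)}=X$ is satisfied. Conditions (H12), (H13), (H16) on $A_{\alpha}$ give that $A$ is continuous and bounded $X\to X^{*}$ (via (H12)), monotone (via (H13)), and positively homogeneous of degree $\gamma$ (via (H16)); continuity plus everywhere-definedness upgrades monotonicity to maximality. Conditions (H12), (H14), (H15) on $C_{\alpha}$ yield that $C$ is bounded demicontinuous and of type $(S_{+})$ w.r.t.\ $D(L)$, as cited from Kittil\"a. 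Finally, because $h^{*}\in X^{*}$ is a fixed continuous linear functional, the operator $C-h^{*}$ inherits boundedness, demicontinuity, and the $(S_{+})$ property w.r.t.\ $D(L)$ from $C$ (subtraction of a constant functional is irrelevant to $\limsup\langle C x_{n}-h^{*},x_{n}-x_{0}\rangle\le 0$). Thus all structural hypotheses of Theorem~\ref{Th2} hold.

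Next I would invoke Theorem~\ref{Th2} directly, with $C$ replaced by $C-h^{*}$ and with $G_{1}=B_{\delta_{1}}(0)$, $G_{2}=B_{\delta_{2}}(0)$; the boundary conditions (H3) and (H4) for the triple $(L,A,C-h^{*})$ are assumed by hypothesis. Theorem~\ref{Th2} then produces $u\in D(L)\cap D(A)\cap(G_{1}\setminus G_{2})$ with
\[
Lu+Au+(C-h^{*})u\ni 0,\qquad\text{i.e.,}\qquad Lu+Au+Cu=h^{*}\text{ in }X^{*}.
\]
Because $A$ is single-valued here, this is a genuine equality.

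Finally I would translate this abstract equation into the weak formulation of the boundary value problem. Testing with $v\in C_{0}^{\infty}(Q)$ and using the definitions of $L$, $A$, $C$, and $h^{*}$, the identity $\langle Lu+Au+Cu,v\rangle=\langle h^{*},v\rangle$ reads
\[
\int_{0}^{a}\langle u'(t),v(t)\rangle_{V}\,dt+\sum_{|\alpha|\le m}\int_{Q}\bigl(A_{\alpha}+C_{\alpha}\bigr)(x,t,\xi(u))\,D^{\alpha}v=\int_{Q}h v,
\]
which is exactly the weak form of the PDE; the Dirichlet trace $D^{\alpha}u=0$ on $\partial\Omega\times[0,a]$ for $|\alpha|\le m-1$ is encoded in $u(t)\in V=W_{0}^{m,p}(\Omega)$ for a.e.\ $t$, and the initial condition $u(x,0)=0$ is encoded in $u\in D(L)$. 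Since $u\in G_{1}\setminus G_{2}$, the resulting weak solution is nontrivial. The main delicate point in this plan is not any computation but confirming that the replacement $C\leadsto C-h^{*}$ preserves the $(S_{+})$-property with respect to $D(L)$; this is routine once one notes that $h^{*}$ is a fixed element of $X^{*}$, so the relevant inner-product contribution vanishes in the limit superior, leaving the $(S_{+})$ condition for $C$ intact.
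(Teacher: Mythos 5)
Your proposal is correct and follows essentially the same route the paper intends: the application section already establishes that $L$ is densely defined linear maximal monotone, that (H12), (H13), (H16) make $A$ bounded, continuous, maximal monotone, and positively homogeneous of degree $\gamma$, and that (H12), (H14), (H15) make $C$ bounded, demicontinuous, and of type $(S_+)$ w.r.t.\ $D(L)$, after which Theorem~\ref{Th2} is invoked for the triple $(L,A,C-h^*)$ exactly as you do. Your explicit remarks that subtracting the fixed functional $h^*$ preserves the $(S_+)$-property and that the abstract equation unwinds to the weak formulation are the (routine) details the paper leaves implicit.
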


\subsection*{Acknowledgments}
This research is carried out by  members of  the Analysis Group of the Association of Nepalese Mathematicians in America (ANMA) within  the \textit{Collaborative Research in Mathematical Sciences} program. Ghanshyam Bhatt acknowledges Tennessee State University for supporting his engagement  in this research with a Non-Instructional Assignment (NIA) grant for the years 2021-2022. The authors express their gratitude to  the anonymous referees for providing  feedback which helped to improve the paper.


\begin{thebibliography}{00}

	
	\bibitem{AM}  A.~Addou, B.~Mermri;
Topological degree and application to a parabolic
		variational inequality problem, \emph{Int. J. Math. . Sci.,} \textbf{25} (2001), no. 4, 	273--287.
	\bibitem{Adhikari2017}  D. R. Adhikari; Nontrivial solutions of inclusions involving perturbed maximal monotone operators, \emph{Electron. J. Differential Equations,} \textbf{2017} (2017), no.~151, 1--21.
	
		\bibitem{AK2016} D. R. Adhikari, A. G. Kartsatos;
	Invariance of domain and eigenvalues for perturbations of
	densely defined linear maximal monotone operators, \emph{Appl. Anal.,}
	\textbf{95} (2016), no.~1, 24--43.	
			\bibitem{AK1} D. R. Adhikari, A. G. Kartsatos;
	A new topological degree theory for perturbations of the sum of
	two maximal monotone operators, 	\emph{Nonlinear Anal.,} \textbf{74} (2011), no. 14,
	4622--4641.
	
	\bibitem{AK2008}  D. R. Adhikari, A. G. Kartsatos;
	Strongly quasibounded maximal monotone
		perturbations for the {Berkovits-Mustonen} topological degree theory, \emph{J.
	Math. Anal. Appl.,} \textbf{348} (2008), no.~1, 12--136.
	
	\bibitem{AK} D. R. Adhikari, A. G. Kartsatos;
	Topological degree theories and nonlinear operator equations in
		{Banach} spaces, \emph{Nonlinear Anal.,} \textbf{69} (2008), no. 4,  1235--1255.
		
	\bibitem{Alber} Y. Alber, I. Ryazantseva; Nonlinear ill-posed problems of monotone type, \emph{Springer, Dordrecht,} 2006.
	
	\bibitem{ASK2012}  T. M.  Asfaw,   A. G. Kartsatos; A {B}rowder topological degree theory for multi-valued 		pseudomonotone perturbations of maximal monotone operators, \emph{Adv. Math. Sci. Appl.,}  \textbf{22} (2012), no.~1, 91--148.
	
	\bibitem{Aubin1984} J. P. Aubin, A.~Cellina;
	Differential inclusions, \emph{Springer-Verlag,}  1984.
	
	\bibitem{BA} V. Barbu;
	Nonlinear semigroups and differential equations in {Banach}
		spaces, \emph{Noordhoff Int. Publ.,} Leyden (The Netherlands), 1975.
	
	\bibitem{BM} J.~Berkovits, V.~Mustonen;
Topological degree for perturbations
		of linear maximal monotone mappings and applications to a class of parabolic
		problems, 	\emph{Rend. Mat. Appl.,}  \textbf{12} (1992), no. 3, 597--621.
	
	\bibitem{BM2008} J.~Berkovits, M.~Miettunen;
	On the uniqueness of the {B}rowder degree, \emph{Proc. Amer. Math. Soc.,} \textbf{136} (2008), no. 10, 3467--3476.
	
	\bibitem{BK} I.~Boubakari, A. G. Kartsatos; The {L}eray-{S}chauder approach to the degree theory for
		{$(S_+)$}-perturbations of maximal monotone operators in
		separable reflexive {B}anach spaces, \emph{Nonlinear Anal.,} \textbf{70} (2009), no. 12,  4350--4368.
	
	\bibitem{BCP} H.~Br\'ezis, M. G. Crandall, A.~Pazy;
	Perturbations of nonlinear  maximal monotone sets in {Banach} spaces,
	\emph{Comm. Pure Appl. Math.,} \textbf{23} (1970), 123--144.
	
	\bibitem{BR1983}   F. E. Browder;
The degree of mapping and its generalizations, \emph{Contemp.
	Math.,} \textbf{21} (1983), 15--40.
	\bibitem{Browder1983}  F. E. Browder;
Fixed point theory and nonlinear problems, 	\emph{Bull. Amer. Math.
	Soc.,} \textbf{9} (1983), no. 1, 1--39.
	
	\bibitem{Browder1976}  F. E. Browder;
Nonlinear operators and nonlinear equations of evolution in
		{Banach} spaces, nonlinear functional analysis, 	\emph{Proc. Sympos. Pure Appl.	Math.,} \textbf{18} (1976), 1--308.

	
	\bibitem{BrowderHess1972} F. E. Browder, P.~Hess;
Nonlinear mappings of monotone type in {Banach}
		spaces, 	\emph{J. Functional Analysis,} \textbf{11} (1972), 251--294.
	
	\bibitem{Cioranescu} I. Cioranescu; Geometry of {B}anach spaces, duality mappings and nonlinear problems,
		\emph{Kluwer Acad. Publ.,} Dordrecht, 1990.
		
	\bibitem{Hess} P. Hess; On nonlinear mappings of monotone type homotopic to odd operators, 	\emph{J. Functional Analysis,} \textbf{11} (1972), 138--167. 
	
	\bibitem{HP}  S.~Hu, N.~S. Papageorgiou;
Generalizations of {Browder's} degree,
		\emph{Trans. Amer. Math. Soc.,} \textbf{347} (1995), no.1,  233--259.
	
	\bibitem{KartsatosLin2003}  A. G. Kartsatos, J.~Lin;
Homotopy invariance of parameter-dependent
		domains and perturbation theory for maximal monotone and m-accretive
		operators in {Banach} spaces, 	\emph{Adv. Differential Equations,} \textbf{8} (2003), no.2, 
	129--160.
	
	\bibitem{Kartsatos2008}  A. G. Kartsatos, J.~Quarcoo;
A new topological degree theory for
		densely defined {$(S_+)_L$}-perturbations of multivalued maximal monotone
		operators in reflexive separable {Banach} spaces, 	\emph{Nonlinear Anal.,}
	\textbf{69} (2008),  no. 8, 2339--2354.
	
	\bibitem{KartsatosSkrypnik}  A. G. Kartsatos, I. V. Skrypnik;
Degree theories and invariance of
		domain for perturbed maximal monotone operators in {Banach} spaces, 	\emph{Adv.	Differential Equations,} 	\textbf{12} (2007), no. 11, 1275--1320.
	
	
	\bibitem{KartsatosSkrypnik2005a} A. G. Kartsatos, I. V. Skrypnik;
A new topological degree theory for densely defined quasibounded
		$(\widetilde S_+)$-perturbations of multivalued maximal monotone operators in
		reflexive {Banach} spaces, 	\emph{Abstr. Appl. Anal.,} (2005), no. 2, 121--158.
	
	\bibitem{KartsatosSkrypnik2005b} A. G. Kartsatos, I. V. Skrypnik;
On the eigenvalue problem for perturbed nonlinear maximal
		monotone operators in reflexive {Banach} spaces, 	\emph{Trans. Amer. Math. Soc.,}
	\textbf{358} (2006), no. 9,  3851--3881.
	
	
	\bibitem{Kittila}  A. Kittil\"a;
On the topological degree for a class of mappings of
		monotone type and applications to strongly nonlinear elliptic problems, 	\emph{Ann. 	Acad. Sci. Fenn. Ser. A I Math. Dissertations,} \textbf{91} (1994), 48pp.
	
	\bibitem{PS} D.~Pascali, S.~Sburlan;
Nonlinear mappings of monotone type, 	\emph{Sijthoff
	and Noordhoof}, Bucharest, 1978.
	
	\bibitem{Rockafellar}  R. Rockafellar, Local boundedness of nonlinear monotone operators,
	\emph{Michigan Math. J.}, \textbf{16} (1969) 397--407.
	
	\bibitem{Simons}  S.~Simons;
Minimax and monotonicity, vol. 1693, 	\emph{Springer-Verlag},  Berlin, 1998.
	
	\bibitem{Skrypnik1986} I. V. Skrypnik;
	Nonlinear elliptic boundary value problems, \emph{BG Teubner},  1986.
	
	\bibitem{Skrypnik1994} I. V. Skrypnik;
Methods for analysis of nonlinear elliptic boundary value
		problems, vol. 139, 	\emph{American Mathematical Society}, 1994.
	
	\bibitem{Troyanski}  S. L. Troyanski;
On locally uniformly convex and differentiable norms in
		certain non-separable {Banach} spaces, 	\emph{Studia Math.},  \textbf{37} (1971),
	173--180.
	
	\bibitem{Zeidler1}  E.~Zeidler;
	Nonlinear functional analysis and its applications,
		\textbf{II/B}, \emph{Springer-Verlag}, New York, 1990.
	
	\bibitem{ZC} S.-S. Zhang,  Y.-Q. Chen; Degree theory for multivalued $(S)$-type mappings and 
		fixed point theorems, 	\emph{Appl. Math. Mech.}, \textbf{11} (1990), no. 5,  441--454.


\end{thebibliography}
\end{document}